\numberwithin{equation}{section}
\theoremstyle{plain}
\newtheorem{thm}{\protect\theoremname}[section]
\theoremstyle{definition}
\newtheorem{defn}[thm]{\protect\definitionname}
\theoremstyle{plain}
\newtheorem{lem}[thm]{\protect\lemmaname}
\theoremstyle{remark}
\newtheorem*{rem*}{\protect\remarkname}
\theoremstyle{remark}
\newtheorem{rem}[thm]{\protect\remarkname}
\theoremstyle{plain}
\newtheorem{prop}[thm]{\protect\propositionname}
\date{}
  \providecommand{\definitionname}{Definition}
  \providecommand{\lemmaname}{Lemma}
\providecommand{\theoremname}{Theorem}
\providecommand{\definitionname}{Definition}
\providecommand{\lemmaname}{Lemma}
\providecommand{\propositionname}{Proposition}
\providecommand{\remarkname}{Remark}
\providecommand{\theoremname}{Theorem}
\begin{document}
\global\long\def\divg{{\rm div}\,}%

\global\long\def\curl{{\rm curl}\,}%

\global\long\def\rt{\mathbb{R}^{3}}%

\global\long\def\rd{\mathbb{R}^{d}}%

\title{On a maximal inequality and its application to SDEs with singular
drift}
\author{Xuan Liu\thanks{Normura International, 30/FL Two International Finance Centre, Hong
Kong. Email: chamonixliu@163.com} $\ $and Guangyu Xi\thanks{Department of Mathematics, University of Maryland, College Park, MD
20742, USA. Email: gxi@umd.edu}}
\maketitle
\begin{abstract}
In this paper we present a Doob type maximal inequality for stochastic
processes satisfying the conditional increment control condition.
If we assume, in addition, that the margins of the process have uniform
exponential tail decay, we prove that the supremum of the process
decays exponentially in the same manner. Then we apply this result
to the construction of the almost everywhere stochastic flow to stochastic
differential equations with singular time dependent divergence-free
drift.

\medskip

\textit{Key words}: Doob's maximal inequality, Kolmogorov's criteria,
divergence-free, Aronson estimate

\textit{MSC2010 subject classifications: 60E15, 60H10}
\end{abstract}

\section{Introduction}

Let $\{\Omega,\{\mathcal{F}_{t}\}_{t\geq0},\mathcal{F},\mathbb{P}\}$
be a filtered probability space satisfying the usual conditions. Let
$\{X_{t}\}_{t\in[0,T]}$ be an $\{\mathcal{F}_{t}\}$-adapted stochastic
process. There has been abundant research on the distribution of the
supremum $\sup_{t\in[0,T]}\vert X_{t}\vert$ since Doob's martingale
maximal inequality. See, for example, \cite{Bogachev1998,McLeish1975,Talagrand1996,Talagrand2014}.
Here we consider continuous processes $\{X_{t}\}_{t\in[0,T]}$ satisfying
the conditional increment control condition as follows.
\begin{defn}
Let $\{X_{t}\}_{t\in[0,T]}$ be a continuous $\{\mathcal{F}_{t}\}$-adapted
stochastic process, and let $p>1$, $0<h\leq1$, $ph>1$. $\{X_{t}\}_{t\in[0,T]}$
is said to satisfy the conditional increment control with parameter
$(p,h)$ if $X_{t}\in L^{p}(\Omega,\rd)$ for all $t\in[0,T]$ and
there exists a constant $A_{p,h}\geq0$ independent of $s$ and $t$
such that 
\begin{equation}
\mathbb{E}\left[\left|\mathbb{E}(X_{t}\vert\mathcal{F}_{s})-X_{s}\right|^{p}\right]\leq A_{p,h}\vert t-s\vert^{ph},\qquad\mbox{ for all }0\leq s<t\leq T.\label{eq: conditional increment control}
\end{equation}
\end{defn}

For processes satisfying condition (\ref{eq: conditional increment control}),
we prove a Doob type maximal inequality as follows.
\begin{thm}
\label{Theorem: Doob Type Maximal Inequality}Suppose $\{X_{t}\}_{t\in[0,T]}$
is a continuous $\{\mathcal{F}_{t}\}$-adapted process satisfying
condition (\ref{eq: conditional increment control}). Let $0\leq s_{0}<t_{0}\leq T$,
and $X^{\ast}=\sup_{u\in[s_{0},t_{0}]}\vert X_{u}\vert$. Then for
any $1<q\leq p$, 
\[
\Vert X^{\ast}\Vert_{L^{q}}\leq\frac{q}{q-1}\left[C_{p,h}^{1/p}A_{p,h}^{1/p}\vert t_{0}-s_{0}\vert^{h}+\Vert X_{t_{0}}\Vert_{L^{q}}\right]
\]
for some constant $C_{p,h}>0$.
\end{thm}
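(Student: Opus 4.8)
The plan is to peel off the closed martingale to $t_0$ and to handle the remainder by a chaining that stays inside conditional expectations. Write $M_u=\mathbb{E}(X_{t_0}\mid\mathcal{F}_u)$ and $Y_u=X_u-M_u$, so that $X^{\ast}\leq\sup_{u\in[s_0,t_0]}|M_u|+\sup_{u\in[s_0,t_0]}|Y_u|$. Since $M$ is a martingale with terminal value $X_{t_0}$, Doob's $L^q$ maximal inequality gives $\Vert\sup_u|M_u|\Vert_{L^q}\leq\frac{q}{q-1}\Vert X_{t_0}\Vert_{L^q}$, which already produces the second term in the claim. The entire difficulty is then to prove $\Vert\sup_u|Y_u|\Vert_{L^p}\leq C_{p,h}^{1/p}A_{p,h}^{1/p}|t_0-s_0|^h$, after which $\Vert\sup_u|Y_u|\Vert_{L^q}\leq\Vert\sup_u|Y_u|\Vert_{L^p}$ (a probability space and $q\leq p$) and the elementary inequality $\frac{p}{p-1}\leq\frac{q}{q-1}$ let me absorb the constants into the stated form.

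Write $G_{s,t}:=\mathbb{E}(X_t\mid\mathcal{F}_s)-X_s$, so $Y_u=-G_{u,t_0}$ and, by the tower property, $\mathbb{E}|G_{s,t}|^p\leq A_{p,h}|t-s|^{ph}$. For $u\in[s_0,t_0]$ and $n\geq0$ let $r_n(u)$ be the smallest dyadic point $s_0+k|t_0-s_0|2^{-n}$ with $r_n(u)\geq u$, so $r_0(u)=t_0$ and $r_n(u)\downarrow u$. Telescoping the conditional expectations and using continuity of $X$ together with the $L^p$-boundedness of $\{X_t\}_{t\in[s_0,t_0]}$ (hence uniform integrability, which forces $\mathbb{E}(X_{r_n(u)}\mid\mathcal{F}_u)\to X_u$), I would establish
\[
G_{u,t_0}=\sum_{n\geq0}\mathbb{E}\bigl(X_{r_n(u)}-X_{r_{n+1}(u)}\mid\mathcal{F}_u\bigr)=\sum_{n\geq0}\mathbb{E}\bigl(G_{r_{n+1}(u),r_n(u)}\mid\mathcal{F}_u\bigr),
\]
the second equality using $u\leq r_{n+1}(u)$ and the tower property, each summand being (the conditional expectation of) a conditional increment over a level-$(n+1)$ interval, and vanishing when $r_n(u)=r_{n+1}(u)$.

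I would then estimate each scale separately. For fixed $n$ the pair $(r_{n+1}(u),r_n(u))$ is constant on each level-$(n+1)$ interval and takes at most $2^{n+1}$ values $(w',w'')$; for each such pair $G_{w',w''}$ is $\mathcal{F}_{w'}$-measurable, so $u\mapsto\mathbb{E}(G_{w',w''}\mid\mathcal{F}_u)$ is a martingale on $[s_0,w']$ terminating at $G_{w',w''}$. Bounding the global supremum of $\mathbb{E}(G_{r_{n+1}(u),r_n(u)}\mid\mathcal{F}_u)$ by the sum over these pairs, applying Doob's $L^p$ inequality to each, and inserting $\mathbb{E}|G_{w',w''}|^p\leq A_{p,h}(|t_0-s_0|2^{-(n+1)})^{ph}$ yields a per-scale bound of order $2^{n+1}(\frac{p}{p-1})^pA_{p,h}(|t_0-s_0|2^{-(n+1)})^{ph}$, i.e. an $L^p$ norm of order $2^{-(n+1)(ph-1)/p}$. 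Since $ph>1$ these are summable, and summing over $n\geq0$ gives the required bound on $\Vert\sup_u|Y_u|\Vert_{L^p}$ with $C_{p,h}$ depending only on $p,h$. Finally I combine this with the Doob bound for $M$ and rewrite using $\frac{p}{p-1}\leq\frac{q}{q-1}$.

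The main obstacle is precisely \emph{why} one must chain through conditional expectations rather than through increments of $Y$ or $X$. For $h<1$ the hypothesis permits no bounded-variation compensator: over a level-$n$ grid $\{u_i\}$ the expected total variation $\sum_i\Vert G_{u_i,u_{i+1}}\Vert_{L^p}$ of the drift diverges like $2^{n(1-h)}$, and the innovation part $Y_t-\mathbb{E}(Y_t\mid\mathcal{F}_s)$ is of the same order as $Y$ itself rather than small in $|t-s|$. Hence a Doob--Meyer split or a Kolmogorov chaining applied to $Y_t-Y_s$ both lose control, because they isolate large, mutually cancelling martingale and drift pieces. Keeping every term inside $\mathbb{E}(\,\cdot\mid\mathcal{F}_u)$ is what circumvents this: each scale contributes a genuine martingale in $u$, to which Doob applies cleanly, and the exponent $ph>1$ is exactly what renders the per-scale contributions summable. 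The remaining points are routine: the a.s./$L^1$ justification of the telescoping limit, and the reduction of $\sup_{u\in[s_0,t_0]}$ to a countable dense set via continuity of $X$ and the c\`adl\`ag modification of $M$.
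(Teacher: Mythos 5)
Your proposal is correct, and it reaches the theorem by a genuinely different architecture than the paper, even though the chaining core is shared in spirit. Both you and the paper ultimately control a supremum of conditional increments by telescoping over dyadic scales and applying Doob's inequality to the martingales $u\mapsto\mathbb{E}(\xi\vert\mathcal{F}_u)$ generated by dyadic conditional increments $\xi$, with $ph>1$ providing summability across scales; your bookkeeping (only one pair $(r_{n+1}(u),r_n(u))$ is active at each $u$, so the $p$-th moment of the scale-$n$ supremum is bounded by a sum of at most $2^{n+1}$ Doob estimates, followed by Minkowski across scales) replaces the weighted Jensen device with the $\zeta(\theta)m^{\theta}$ factors in the paper's Lemma~\ref{lem: supremum p norm}, and in fact produces a cleaner constant, roughly $\left(\frac{p}{p-1}\right)^{p}\left(2^{(ph-1)/p}-1\right)^{-p}$. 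The real divergence is in how the increment bound is converted into the maximal inequality. You split $X_u=M_u+Y_u$ with the closed martingale $M_u=\mathbb{E}(X_{t_0}\vert\mathcal{F}_u)$ and quote classical Doob for $M$, so you only ever need the one-parameter supremum $\sup_{u}\vert\mathbb{E}(X_{t_0}\vert\mathcal{F}_u)-X_u\vert$. The paper does not decompose: it proves the stronger two-parameter bound on $\sup_{s<t}\vert\mathbb{E}(X_t\vert\mathcal{F}_s)-X_s\vert$ (Lemma~\ref{lem: supremum p norm}), then re-derives a Doob-type inequality for the non-martingale $X$ itself via a first-passage stopping time (Lemma~\ref{lem: lemma inequalities}), and finishes with the layer-cake computation giving $\Vert X^{\ast}\Vert_{L^{q}}\leq\frac{q}{q-1}\Vert Z\Vert_{L^{q}}$ with $Z=\sup_{u}\vert\mathbb{E}(X_{t_0}\vert\mathcal{F}_u)-X_u\vert+\vert X_{t_0}\vert$. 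Your route is leaner---no stopping-time lemma, and the factor $\frac{q}{q-1}$ ends up multiplying only $\Vert X_{t_0}\Vert_{L^{q}}$, which is slightly sharper than the stated bound---whereas the paper's route yields two reusable intermediate results (the two-parameter supremum estimate and the local maximal inequality), although only the fixed-endpoint case $t=t_0$ of Lemma~\ref{lem: supremum p norm} is actually invoked there and again in Theorem~\ref{thm: Main theorem 1}. The technical points you defer are indeed routine: uniform $L^{p}$-boundedness of $\{X_t\}$ follows from $\Vert X_s\Vert_{L^{p}}\leq\Vert X_{t_0}\Vert_{L^{p}}+A_{p,h}^{1/p}T^{h}$, giving the uniform integrability needed for the telescoping limit, and the reduction of suprema to countable dense sets via continuity of $X$ and c\`adl\`ag modifications is the same regularization step the paper performs.
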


Under condition (\ref{eq: conditional increment control}), we further
study the tail decay of $\sup_{t\in[0,T]}\vert X_{t}\vert$ when the
margins of $\{X_{t}\}_{t\in[0,T]}$ have uniform $\alpha$-exponential
decay for some $\alpha>0$, i.e. there exist $C_{1},C_{2}>0$ such
that
\[
\mathbb{P}\left(\vert X_{t}\vert\geq\lambda\right)\leq C_{2}\exp\left(-C_{1}\lambda^{\alpha}\right),\qquad\mbox{ for all }\lambda>0\mbox{ and all }t\in[0,T].
\]
In Theorem \ref{thm: Main theorem 1}, we prove that if a continuous
process $\{X_{t}\}_{t\in[0,T]}$ has uniform $\alpha$-exponential
marginal decay and satisfies the conditional increment control for
$(p,h)$ with $p>1$, $0<h\leq1$ and $ph>1$, then its supremum $\sup_{t\in[0,T]}\vert X_{t}\vert$
decays in the same manner as its margins, i.e. 
\[
\mathbb{P}\left(\sup_{t\in[0,T]}\vert X_{t}\vert\geq\lambda\right)\leq C\exp(-C\lambda^{\alpha}).
\]

Our results here are closely related to the celebrated theorems of
Kolmogorov and Doob. The conditional increment control condition can
be easily deduced from Kolmogorov’s continuity and tightness criteria,
which means our result can be applied to a large class of diffusion
processes as well as Gaussian processes like fractional Brownian motion.
Moreover, the conditional increment control condition is also satisfied
by martingales, which makes Theorem \ref{Theorem: Doob Type Maximal Inequality}
a generalization of Doob's maximal inequality. In addition to being
mathematically interesting, our results also have practical significance
since we have no structural assumption on the processes. We only assume
the conditional increment control and exponential marginal decay,
which can be directly verified using empirical data. In this article,
we show an application of our results to the study of stochastic differential
equations 
\begin{equation}
dX_{t}=b(t,X_{t})dt+dB_{t},\label{eq: SDE1}
\end{equation}
where $b$ is a time dependent divergence-free vector field and $B_{t}$
is the standard Brownian motion on $\rd$. Our main result of this
application is Theorem \ref{thm: SDE main theorem}, which states
that there is unique almost everywhere stochastic flow $X(\omega,x):\Omega\times\rd\rightarrow C([0,T],\rd)$
to SDE (\ref{eq: SDE1}) if $b\in L^{1}(0,T;W^{1,p}(\rd))\cap L^{l}(0,T;L^{q}(\rd))$
with $d\geq3$, $p\geq1$, $\frac{2}{l}+\frac{d}{q}\in[1,2)$. Here
$X(\omega,x)$ is defined for almost every $(\omega,x)\in\Omega\times\rd$
under $\mathbb{P}\times m$, where $m$ is the Lebesgue measure on
$\rd$.

For SDEs with singular drift, Aronson \cite{Aronson1968} proved that
there is a unique weak solution to (\ref{eq: SDE1}) when $b\in L^{l}(0,T;L^{q}(\rd))$
with $\frac{2}{l}+\frac{d}{q}<1$. Moreover, it was proved that the
transition probability of $\{X_{t}\}_{t\in[0,T]}$ satisfies the Aronson
estimate 
\[
\frac{1}{C_{1}(t-\tau)^{d/2}}\exp\left(-C_{2}\left(\frac{\vert x-\xi\vert^{2}}{t-\tau}\right)\right)\leq\Gamma(t,x;\tau,\xi)\leq\frac{C}{(t-\tau)^{d/2}}\exp\left(-\frac{1}{C}\left(\frac{\vert x-\xi\vert^{2}}{t-\tau}\right)\right).
\]
Actually the Aroson estimate is true in more general cases when the
diffusion coefficient is uniformly elliptic and $b\in L^{l}(0,T;L^{q}(\rd))$
with $\frac{2}{l}+\frac{d}{q}\leq1$, $q>d$. With the same condition
on $b$ as in Aronson \cite{Aronson1968}, Krylov and Röckner \cite{KrylovRockner2005}
proved that there exists a unique strong solution to (\ref{eq: SDE1}).
Regularity results about the strong solution are obtained in Fedrizzi
and Flandoli \cite{FedrizziFlandoli2011,FedrizziFlandoli2013}. If
we assume boundedness of $\divg b$, since the introduction of the
renormalized solutions by DiPerna and Lions \cite{DiPernaLions1989},
there has been lots of work on ODEs $dX_{t}=b(t,X_{t})dt$. In particular,
Crippa and De Lellis \cite{DelellisCrippa2008} developed new estimate
on ODEs with Sobolev coefficient $b$ and gave a new approach to construct
the Diperna-Lions flow. This idea is extended to solve SDEs in \cite{FangLuoThalmaier2010,ZhangXicheng2009,ZhangXicheng2010}.
In Zhang \cite{ZhangXicheng2009}, in addition to boundedness of $\divg b$,
it assumes that $\nabla b\in L\log L(\rd)$ to control $X_{t}$ locally
and that $\vert b\vert/(1+\vert x\vert)\in L^{\infty}(\rd)$ to control
$X_{t}$ from explosion. Together with Sobolev condition on the diffusion
coefficient, Zhang \cite{ZhangXicheng2009} proved existence of a
unique almost everywhere stochastic flow to SDEs. Since it is harder
to control the growth of solutions to SDEs than ODEs, the linear growth
condition on $b$ is needed in Zhang \cite{ZhangXicheng2009}, while
it is not necessary in Crippa and De Lellis \cite{DelellisCrippa2008}.
Fang, Luo and Thalmaier \cite{FangLuoThalmaier2010} extend it to
SDEs in Gaussian space with Sobolev diffusion and drift coefficients.
In Zhang \cite{ZhangXicheng2010}, it relaxes the boundedness of $\divg b$
to only the negative part of $\divg b$, and proved large deviation
principle for the corresponding SDEs. 

In Section 3, we prove the existence of the almost everywhere stochastic
flow to (\ref{eq: SDE1}) through approximation. Take smooth approximation
sequence $b_{n}\rightarrow b$, such that $\{b_{n}\}_{n\in\mathbb{N}}$
is uniformly bounded in $L^{1}(0,T;W^{1,p}(\rd))\cap L^{l}(0,T;L^{q}(\rd))$
and divergence-free. Using the Aronson type upper bound estimate of
the transition probability proved in Qian and Xi \cite[Corollary 9]{qian2017parabolic},
we can show that $\{X_{t}^{(n)}\}_{n\in\mathbb{N}}$ satisfies uniform
conditional increment control and exponential marginal decay. Hence
Theorem \ref{thm: Main theorem 1} implies that $\{\sup_{t\in[0,T]}\vert X_{t}^{(n)}\vert\}_{n\in\mathbb{N}}$
can be controlled uniformly and allows us to remove the linear growth
condition on $b$ used in Zhang \cite{ZhangXicheng2009}. Then following
the idea of Zhang \cite{ZhangXicheng2009}, in Theorem \ref{thm: SDE main theorem}
we prove that the sequence $\{X_{t}^{(n)}\}_{n\in\mathbb{N}}$ converges
to a unique limit $X_{t}$, which is the unique almost everywhere
stochastic flow to SDE (\ref{eq: SDE1}). It worth noting that the
proof of Theorem \ref{thm: SDE main theorem} only uses the moment
estimate of the supremum $\sup_{t\in[0,T]}\vert X_{t}\vert$ proved
in Proposition \ref{prop: supremum estimate of diffusion}. The moment
estimate actually can be obtained from the Aronson estimate using
Kolmogorov's continuity theorem as well, while the exponential decay
of the supremum proved in Proposition \ref{prop: supremum estimate of diffusion}
is new. In a special case, Theorem \ref{thm: SDE main theorem} is
true when divergence-free $b\in L^{2}(0,T;H^{1}(\rt))$, which is
of particular interest since the Leray-Hopf weak solutions to the
3-dimensional Navier-Stokes equations are in this space. However,
the existence of a unique almost everywhere stochastic flow does not
imply the uniqueness of the weak solutions to the corresponding parabolic
equations. Since the stochastic flow is defined for almost everywhere
initial data $x\in\rd$, it actually disguises the ``bad'' points
in the measure zero set. For more discussion on the non-uniqueness
of parabolic equations, we refer to Modena and Székelyhidi \cite{StefanoSzekelyhidi2018}.
For simplicity, in this article we only discuss the case when the
drift is divergence-free and the diffusion is the standard Brownian
motion. But actually, using the idea in \cite[Corollary 9]{qian2017parabolic},
we can obtain the Aronson type estimate and hence extend the result
in Theorem \ref{thm: SDE main theorem} for $\divg b\in L^{l'}(0,T;L^{q'}(\rd))$
with $\frac{2}{l'}+\frac{d}{q'}<2$. Moreover, the diffusion coefficient
also can be extended to be Sobolev as in Zhang \cite{ZhangXicheng2009}.

\section{Maximal Inequality}

In this section, we always assume that the process $\{X_{t}\}_{t\in[0,T]}$
satisfies the conditional increment control (\ref{eq: conditional increment control})
with $p>1$, $0<h\leq1$ and $ph>1$. Under this condition, we prove
the Doob type maximal inequality in Theorem \ref{Theorem: Doob Type Maximal Inequality}
and the exponential tail decay for the supremum $\sup_{t\in[0,T]}\vert X_{t}\vert$
in Theorem \ref{thm: Main theorem 1}. Before starting the proof,
we give below some examples of processes which satisfy the conditional
increment control.
\begin{itemize}
\item Any continuous martingales. The conditional increment control is satisfied
for any $(p,h)$ with $A_{p,h}=0$. 
\item Fractional Brownian motions with Hurst parameter $h\in(0,1)$.
\item Let $\{X_{t}\}_{t\in[0,T]}$ be a continuous stochastic process satisfying
\begin{equation}
\mathbb{E}\left(\left|X_{t}-X_{s}\right|^{p}\right)\leq A_{p,h}\vert t-s\vert^{ph},\qquad\mbox{ for all }0\leq s<t\leq T.\label{eq: conditional increment 2}
\end{equation}
Using Jensen's inequality, it is easy to see that the conditional
increment control is satisfied with the same parameter $(p,h)$ and
the same constant $A_{p,h}$. Processes satisfying (\ref{eq: conditional increment 2})
are archetypal examples considered in the rough paths theory for which
canonical constructions of associated geometric rough paths are available
and well-studied (see \cite{MR2604669,MR2036784}). This type of processes
also arises as solutions to SDEs.
\end{itemize}

\subsection{A Doob Type Maximal Inequality}

Before proving the maximal inequality, we will need two lemmas. Firstly,
we prove the following estimate for the supremum of the conditional
increment. 
\begin{lem}
\label{lem: supremum p norm}Suppose $\{X_{t}\}_{t\in[0,T]}$ is a
continuous $\{\mathcal{F}_{t}\}$-adapted process satisfying conditional
increment control with parameter $(p,h)$. For any $0\leq s_{0}<t_{0}\leq T$,
there holds that 
\[
\mathbb{E}\left(\sup_{s_{0}\leq s<t\leq t_{0}}\left|\mathbb{E}(X_{t}\vert\mathcal{F}_{s})-X_{s}\right|^{p}\right)\leq C_{p,h}A_{p,h}\vert t_{0}-s_{0}\vert^{ph},
\]
where 
\[
C_{p,h}=[2\zeta(\theta)]^{p-1}\left(\frac{p}{p-1}\right)^{p}\left(\frac{4}{ph-1}\right)^{\theta(p-1)+1}\Gamma[\theta(p-1)+1].
\]
Here $\theta>1$ is an arbitrary constant, $\zeta(\theta):=\sum_{m=1}^{\infty}m^{-\theta}<\infty$
for all $\theta>1$  and $\Gamma(z)$ is the Gamma function.
\end{lem}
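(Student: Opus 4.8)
The starting point of my plan is the algebraic identity
\[
Y_{s,t} := \mathbb{E}(X_t\vert\mathcal{F}_s) - X_s = \mathbb{E}(X_t - X_s\vert\mathcal{F}_s),
\]
valid because $X_s$ is $\mathcal{F}_s$-measurable. The quantity $Y_{s,t}$ is \emph{not} additive in $(s,t)$, which is the principal obstacle, but this identity moves the genuinely additive increments $X_t - X_s$ inside the conditional expectation. I would first reduce to a countable dense set: writing $t^n_k = s_0 + k2^{-n}(t_0-s_0)$ for the dyadic points of $[s_0,t_0]$ and using continuity of $X$ together with right-continuity of the martingales $s\mapsto\mathbb{E}(Z\vert\mathcal{F}_s)$, it suffices to bound $\sup|Y_{s,t}|$ over dyadic pairs $s<t$ and let the mesh refine. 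I then set $\Delta^n_k := Y_{t^n_k,t^n_{k+1}} = \mathbb{E}(X_{t^n_{k+1}}-X_{t^n_k}\vert\mathcal{F}_{t^n_k})$, which is $\mathcal{F}_{t^n_k}$-measurable and, by (\ref{eq: conditional increment control}), satisfies $\mathbb{E}|\Delta^n_k|^p \le A_{p,h}(2^{-n}(t_0-s_0))^{ph}$.

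Next comes the dyadic chaining. For dyadic $s<t$ I would connect $s$ to $t$ by the usual staircase of dyadic points, which expresses $X_t - X_s$ as a telescoping sum in which at most two consecutive-dyadic increments occur at each level $n$, every such increment $X_{r+2^{-n}(t_0-s_0)}-X_r$ having left endpoint $r \ge s$. Applying $\mathbb{E}(\cdot\vert\mathcal{F}_s)$ and the tower property turns each summand into $\mathbb{E}(\Delta^n_k\vert\mathcal{F}_s)$ with $s \le t^n_k$, whence
\[
|Y_{s,t}| \le 2\sum_{n\ge1}\tilde S_n, \qquad \tilde S_n := \max_{0\le k<2^n}\ \sup_{s_0\le s\le t^n_k}\bigl|\mathbb{E}(\Delta^n_k\vert\mathcal{F}_s)\bigr|.
\]
The crucial feature is that the left endpoints stay $\ge s$, so the conditioning on $\mathcal{F}_s$ is legitimate throughout and the supremum over $s$ is a supremum of a martingale.

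I would then estimate $\tilde S_n$ by Doob's $L^p$ maximal inequality. For fixed $k$, the process $s\mapsto\mathbb{E}(\Delta^n_k\vert\mathcal{F}_s)$ on $[s_0,t^n_k]$ is a closed martingale whose terminal value is $\mathbb{E}(\Delta^n_k\vert\mathcal{F}_{t^n_k})=\Delta^n_k$ (here the $\mathcal{F}_{t^n_k}$-measurability of $\Delta^n_k$ is essential). Hence Doob gives $\mathbb{E}[\sup_s|\mathbb{E}(\Delta^n_k\vert\mathcal{F}_s)|^p]\le(p/(p-1))^p\mathbb{E}|\Delta^n_k|^p$, and summing over the $2^n$ indices,
\[
\mathbb{E}[\tilde S_n^p] \le 2^n\Bigl(\tfrac{p}{p-1}\Bigr)^p A_{p,h}\bigl(2^{-n}(t_0-s_0)\bigr)^{ph} = \Bigl(\tfrac{p}{p-1}\Bigr)^p A_{p,h}(t_0-s_0)^{ph}\,2^{-n(ph-1)}.
\]
This is where the factor $(p/(p-1))^p$ in $C_{p,h}$ originates, and where the hypothesis $ph>1$ enters to make the bound summable.

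Finally I would convert $\sum_n\tilde S_n$ into a sum of $p$-th powers by a weighted Jensen inequality with weights $w_n = n^{-\theta}/\zeta(\theta)$, which sum to $1$: pointwise $(\sum_n\tilde S_n)^p \le \zeta(\theta)^{p-1}\sum_n n^{\theta(p-1)}\tilde S_n^p$. Taking expectations and inserting the bound on $\mathbb{E}[\tilde S_n^p]$ reduces everything to the numerical series $\sum_{n\ge1}n^{\theta(p-1)}2^{-n(ph-1)}$, which I would control by comparison with $\int_0^\infty x^{\theta(p-1)}e^{-(ph-1)x\ln2}\,dx = \Gamma(\theta(p-1)+1)/((ph-1)\ln2)^{\theta(p-1)+1}$; this produces exactly the factors $\Gamma[\theta(p-1)+1]$ and $(4/(ph-1))^{\theta(p-1)+1}$, the constant $4>1/\ln2$ absorbing the sum-versus-integral discrepancy, while the chaining and Jensen steps supply the remaining factor $[2\zeta(\theta)]^{p-1}$. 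Collecting the factors yields $C_{p,h}$. The main difficulties I anticipate are conceptual rather than computational: recognizing the identity $Y_{s,t}=\mathbb{E}(X_t-X_s\vert\mathcal{F}_s)$ to recover additivity of the increments, and arranging the chaining so that every conditioning $\sigma$-algebra $\mathcal{F}_s$ sits below the corresponding increment's left endpoint, which is precisely what lets Doob's inequality dispose of the supremum over $s$.
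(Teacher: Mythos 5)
Your proposal follows essentially the same route as the paper's proof: the same tower-property identity, the same dyadic chaining with at most two intervals per level, Doob's $L^{p}$ maximal inequality applied to the closed martingales $s\mapsto\mathbb{E}(\Delta_{k}^{n}\vert\mathcal{F}_{s})$ and summed over all $2^{n}$ indices at level $n$, the weighted Jensen inequality with weights $n^{-\theta}/\zeta(\theta)$, and the integral comparison producing $\Gamma[\theta(p-1)+1]$ and $(4/(ph-1))^{\theta(p-1)+1}$. The only cosmetic discrepancy is the bookkeeping of the at-most-two-per-level factor: bounding by $2\sum_{n}\tilde{S}_{n}$ and then raising to the $p$-th power yields $2^{p}\zeta(\theta)^{p-1}$ rather than the claimed $[2\zeta(\theta)]^{p-1}$, which is recovered by instead keeping the two level-$n$ terms separate and applying $(a+b)^{p}\leq2^{p-1}(a^{p}+b^{p})$ inside the Jensen sum, as the paper does.
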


\begin{proof}
Let $s,t\in[s_{0},t_{0}]$, $s<t$ be fixed temporarily. Denote the
dyadic intervals
\[
I_{l}^{m}=[t_{l-1}^{m},t_{l}^{m}]=s_{0}+(t_{0}-s_{0})*\left[\frac{l-1}{2^{m}},\frac{l}{2^{m}}\right].
\]
Then we will construct a sequence of intervals $\{J_{k}\}\subset\{I_{l}^{m}:1\leq l\leq2^{m},m\geq0\}$
which gives a partition to $[s,t]$ such that
\begin{description}
\item [{(i)}] $J_{k}$, $k=1,2,\cdots$, have mutually disjoint interior;
\item [{(ii)}] For any $m\geq1$, there are at most two elements of $\{J_{k}\}$
with length $(t_{0}-s_{0})2^{-m}$;
\item [{(iii)}] $(s,t)\subset\cup_{k=1}^{\infty}J_{k}\subset[s,t]$.
\end{description}
Suppose $m_{0}=\min\{m\in\mathbb{N}:\exists1\leq l\leq2^{m}\text{\mbox{ such that }}I_{l}^{m}\subset[s,t]\}$.
For $m_{0}$, either there is only one $1\leq l_{0}\leq2^{m_{0}}$
such that $I_{l_{0}}^{m_{0}}\subset[s,t]$ and $[s,t]=[s,t_{l_{0}-1}^{m_{0}}]\cup I_{l_{0}}^{m_{0}}\cup[t_{l_{0}}^{m_{0}},t]$,
or there are two consecutive $1\leq l_{0}<l_{0}+1\leq2^{m_{0}}$ such
that $\left(I_{l_{0}}^{m_{0}}\cup I_{l_{0}+1}^{m_{0}}\right)\subset[s,t]$
and $[s,t]=[s,t_{l_{0}-1}^{m_{0}}]\cup I_{l_{0}}^{m_{0}}\cup I_{l_{0}+1}^{m_{0}}\cup[t_{l_{0}+1}^{m_{0}},t]$.
Here we only deal with the first case and the second case follows
the same argument. Notice that $\vert s-t_{l_{0}-1}^{m_{0}}\vert$
and $\vert t_{l_{0}}^{m_{0}}-t\vert$ are smaller than $(t_{0}-s_{0})2^{-m_{0}}$.
For $[t_{l_{0}}^{m_{0}},t]$, we set $m_{k+1}=\min\{m>m_{k}:\exists1\leq l\leq2^{m}\text{\mbox{ such that }}I_{l}^{m}\subset[t_{l_{k}}^{m_{k}},t]\}$.
There is at most one $1\leq l_{k+1}\leq2^{m_{k+1}}$ such that $I_{l_{k+1}}^{m_{k+1}}\subset[t_{l_{k}}^{m_{k}},t]$
since $\vert t_{l_{k}}^{m_{k}}-t\vert<(t_{0}-s_{0})2^{-m_{k}}$. Then
$\{I_{l_{k}}^{m_{k}}\}$ forms a dyadic partition to $[t_{l_{0}}^{m_{0}},t]$,
together with $I_{l_{0}}^{m_{0}}$ and a dyadic partition to $[s,t_{l_{0}-1}^{m_{0}}]$
following similar argument, we obtained the collection of intervals
$\{J_{k}\}$ satisfying (i)-(iii).

Suppose $J_{k}=[u_{k-1},u_{k}]$. Then 
\begin{align*}
\left|\mathbb{E}(X_{t}\vert\mathcal{F}_{s})-X_{s}\right| & =\left|\sum_{k=1}^{\infty}\mathbb{E}(\Delta X_{J_{k}}\vert\mathcal{F}_{s})\right|\\
 & \leq\sum_{k=1}^{\infty}\mathbb{E}\left[\left.\vert\mathbb{E}(\Delta X_{J_{k}}\vert\mathcal{F}_{u_{k-1}})\vert\right|\mathcal{F}_{s}\right]\\
 & =\sum_{m=1}^{\infty}\sum_{\{J_{k}:\vert J_{k}\vert=(t_{0}-s_{0})2^{-m}\}}\mathbb{E}\left[\left.\vert\mathbb{E}(\Delta X_{J_{k}}\vert\mathcal{F}_{u_{k-1}})\vert\right|\mathcal{F}_{s}\right],
\end{align*}
where $\Delta X_{J_{k}}=X_{u_{k}}-X_{u_{k-1}}$. Let $\xi_{l}^{m}=\mathbb{E}(\Delta X_{I_{l}^{m}}\vert\mathcal{F}_{t_{l-1}^{m}})$
for $1\leq l\leq2^{m}$, $m=1,2,\cdots$. For any $\theta>1$, take
$\zeta(\theta)=\sum_{m=1}^{\infty}m^{-\theta}$ to be the Riemann
zeta function. By Jensen's inequality, we have
\begin{align*}
\left|\mathbb{E}(X_{t}\vert\mathcal{F}_{s})-X_{s}\right|^{p} & \leq\left(\sum_{m=1}^{\infty}\frac{1}{\zeta(\theta)m^{\theta}}\zeta(\theta)m^{\theta}\sum_{\{J_{k}:\vert J_{k}\vert=(t_{0}-s_{0})2^{-m}\}}\mathbb{E}\left[\left.\vert\mathbb{E}(\Delta X_{J_{k}}\vert\mathcal{F}_{u_{k-1}})\vert\right|\mathcal{F}_{s}\right]\right)^{p}\\
 & \leq\sum_{m=1}^{\infty}\frac{1}{\zeta(\theta)m^{\theta}}\left(\zeta(\theta)m^{\theta}\sum_{\{J_{k}:\vert J_{k}\vert=(t_{0}-s_{0})2^{-m}\}}\mathbb{E}\left[\left.\vert\mathbb{E}(\Delta X_{J_{k}}\vert\mathcal{F}_{u_{k-1}})\vert\right|\mathcal{F}_{s}\right]\right)^{p}\\
 & =\zeta(\theta)^{p-1}\sum_{m=1}^{\infty}m^{\theta(p-1)}\left(\sum_{\{J_{k}:\vert J_{k}\vert=(t_{0}-s_{0})2^{-m}\}}\mathbb{E}\left[\left.\vert\mathbb{E}(\Delta X_{J_{k}}\vert\mathcal{F}_{u_{k-1}})\vert\right|\mathcal{F}_{s}\right]\right)^{p}\\
 & \leq[2\zeta(\theta)]^{p-1}\sum_{m=1}^{\infty}m^{\theta(p-1)}\sum_{\{J_{k}:\vert J_{k}\vert=(t_{0}-s_{0})2^{-m}\}}\left(\mathbb{E}\left[\left.\vert\mathbb{E}(\Delta X_{J_{k}}\vert\mathcal{F}_{u_{k-1}})\vert\right|\mathcal{F}_{s}\right]\right)^{p}\\
 & \le[2\zeta(\theta)]^{p-1}\sum_{m=1}^{\infty}m^{\theta(p-1)}\sum_{l=1}^{2^{m}}\sup_{r\in[s_{0},t_{0}]}\left[\mathbb{E}\left(\left.\vert\xi_{l}^{m}\vert\right|\mathcal{F}_{r}\right)\right]^{p},
\end{align*}
where the inequality in the fourth line is due to property (ii) of
$\{J_{k}\}$. Notice that for $s\leq t_{2}\leq t_{1}$, by Jensen's
inequality we have
\begin{align*}
\mathbb{E}\left|\mathbb{E}(X_{t_{1}}\vert\mathcal{F}_{s})-\mathbb{E}(X_{t_{2}}\vert\mathcal{F}_{s})\right|^{p} & =\mathbb{E}\left|\mathbb{E}\left(\left.[\mathbb{E}(X_{t_{1}}\vert\mathcal{F}_{t_{2}})-X_{t_{2}}]\right|\mathcal{F}_{s}\right)\right|^{p}\\
 & \leq\mathbb{E}\left(\mathbb{E}\left(\left.\vert\mathbb{E}(X_{t_{1}}\vert\mathcal{F}_{t_{2}})-X_{t_{2}}\vert^{p}\right|\mathcal{F}_{s}\right)\right)\\
 & =\mathbb{E}\left[\vert\mathbb{E}(X_{t_{1}}\vert\mathcal{F}_{t_{2}})-X_{t_{2}}\vert^{p}\right]\\
 & \leq A_{p,h}\vert t_{1}-t_{2}\vert^{ph}.
\end{align*}
By Kolmogorov's continuity theorem, for any fixed $s\in[s_{0},t_{0}]$,
$\mathbb{E}(X_{t}\vert\mathcal{F}_{s})$ is a continuous process with
respect to $t$. Recall that the filtration satisfies the usual conditions.
$\mathbb{E}(X_{t}\vert\mathcal{F}_{s})$ can be regarded as a process
of $s$ for fixed $t$ and it has a càdlàg modification by Doob's
regularization theorem. Hence, $\sup_{s_{0}\leq s<t\leq t_{0}}\left|\mathbb{E}(X_{t}\vert\mathcal{F}_{s})-X_{s}\right|^{p}$
is measurable with respect to $\mathcal{F}$ and we have 
\[
\sup_{s_{0}\leq s<t\leq t_{0}}\left|\mathbb{E}(X_{t}\vert\mathcal{F}_{s})-X_{s}\right|^{p}\leq[2\zeta(\theta)]^{p-1}\sum_{m=1}^{\infty}m^{\theta(p-1)}\sum_{l=1}^{2^{m}}\sup_{r\in[s_{0},t_{0}]}\left[\mathbb{E}\left(\left.\vert\xi_{l}^{m}\vert\right|\mathcal{F}_{r}\right)\right]^{p}.
\]
By Doob's maximal inequality for martingales, 
\begin{align*}
\mathbb{E}\left(\sup_{s_{0}\leq s<t\leq t_{0}}\left|\mathbb{E}(X_{t}\vert\mathcal{F}_{s})-X_{s}\right|^{p}\right) & \leq[2\zeta(\theta)]^{p-1}\sum_{m=1}^{\infty}m^{\theta(p-1)}\sum_{l=1}^{2^{m}}\mathbb{E}\left(\sup_{r\in[s_{0},t_{0}]}\left[\mathbb{E}\left(\left.\vert\xi_{l}^{m}\vert\right|\mathcal{F}_{r}\right)\right]^{p}\right)\\
 & \leq[2\zeta(\theta)]^{p-1}\sum_{m=1}^{\infty}m^{\theta(p-1)}\sum_{l=1}^{2^{m}}\left(\frac{p}{p-1}\right)^{p}\mathbb{E}\left(\left[\mathbb{E}\left(\left.\vert\xi_{l}^{m}\vert\right|\mathcal{F}_{t_{0}}\right)\right]^{p}\right)\\
 & \leq[2\zeta(\theta)]^{p-1}\left(\frac{p}{p-1}\right)^{p}\sum_{m=1}^{\infty}m^{\theta(p-1)}\sum_{l=1}^{2^{m}}\mathbb{E}\left(\vert\xi_{l}^{m}\vert^{p}\right)\\
 & \leq A_{p,h}[2\zeta(\theta)]^{p-1}\left(\frac{p}{p-1}\right)^{p}\sum_{m=1}^{\infty}m^{\theta(p-1)}2^{m}\left(\frac{\vert t_{0}-s_{0}\vert}{2^{m}}\right)^{ph}\\
 & =C_{p,h}A_{p,h}\vert t_{0}-s_{0}\vert^{ph},
\end{align*}
where 
\[
C_{p,h}=[2\zeta(\theta)]^{p-1}\left(\frac{p}{p-1}\right)^{p}\sum_{m=1}^{\infty}m^{\theta(p-1)}2^{-m(ph-1)}.
\]
Notice that 
\begin{align*}
\sum_{m=1}^{\infty}m^{\theta(p-1)}2^{-m(ph-1)} & \leq\sum_{m=1}^{\infty}\left(e^{\theta(p-1)}\int_{m-1}^{m}r^{\theta(p-1)}dr\right)2^{-m(ph-1)}\\
 & \leq\sum_{m=1}^{\infty}e^{\theta(p-1)}\int_{m-1}^{m}r^{\theta(p-1)}e^{-r(ph-1)\ln2}dr\\
 & \leq\left(\frac{e}{(ph-1)\ln2}\right)^{\theta(p-1)+1}\int_{0}^{\infty}r^{\theta(p-1)}e^{-r}dr\\
 & \leq\left(\frac{4}{ph-1}\right)^{\theta(p-1)+1}\Gamma[\theta(p-1)+1].
\end{align*}
Now the proof is complete. 
\end{proof}
Using Lemma \ref{lem: supremum p norm}, we show a Doob type inequality
for processes satisfying condition (\ref{eq: conditional increment control}).
To this end, we shall need the following elementary result. 
\begin{lem}
\label{lem: lemma inequalities}Let $\{Y_{t}\}_{t\in[0,T]}$ be a
continuous stochastic process such that $\mathbb{E}(\vert Y_{t}\vert)<\infty$
for all $t\in[0,T]$. Let $0\leq s_{0}<t_{0}\leq T$. Then 
\begin{description}
\item [{(1)}] For any stopping time $\tau$ with $s_{0}\leq\tau\leq t_{0}$,
we have 
\begin{equation}
\left|\mathbb{E}(Y_{t_{0}}\vert\mathcal{F}_{\tau})-Y_{\tau}\right|\leq\mathbb{E}\left[\left.\sup_{u\in[s_{0},t_{0}]}\vert\mathbb{E}(Y_{t_{0}}\vert\mathcal{F}_{u})-Y_{u}\vert\right|\mathcal{F_{\tau}}\right].\label{eq: lemma inequality 1}
\end{equation}
\item [{(2)}] For any $\lambda>0$, we have 
\begin{equation}
\mathbb{P}\left(\sup_{u\in[s_{0},t_{0}]}\vert Y_{u}\vert\geq\lambda\right)\leq\frac{1}{\lambda}\int_{\{\sup_{u\in[s_{0},t_{0}]}\vert Y_{u}\vert\geq\lambda\}}\left[\sup_{u\in[s_{0},t_{0}]}\vert\mathbb{E}(Y_{t_{0}}\vert\mathcal{F}_{u})-Y_{u}\vert+\vert Y_{t_{0}}\vert\right]d\mathbb{P}.\label{eq: Maximal inequality loca inequality}
\end{equation}
\end{description}
\end{lem}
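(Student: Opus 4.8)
The plan is to handle both parts through a single stopping-time argument anchored on the closed martingale $M_{u}:=\mathbb{E}(Y_{t_{0}}\vert\mathcal{F}_{u})$, $u\in[s_{0},t_{0}]$. Since $Y_{t_{0}}\in L^{1}$, this is a uniformly integrable martingale, and because the filtration satisfies the usual conditions it admits a c\`adl\`ag modification (the same regularization already invoked in the proof of Lemma~\ref{lem: supremum p norm}); I fix this modification throughout. Write $Z:=\sup_{u\in[s_{0},t_{0}]}\vert M_{u}-Y_{u}\vert$, a nonnegative $\mathcal{F}$-measurable random variable. All estimates below are read in $[0,\infty]$ for nonnegative quantities, so no integrability of $Z$ or of $Y_{\tau}$ is needed.

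For part (1), the first step is to invoke the optional sampling theorem for the closed c\`adl\`ag martingale $M$: as $s_{0}\le\tau\le t_{0}$ is a bounded stopping time, $\mathbb{E}(Y_{t_{0}}\vert\mathcal{F}_{\tau})=M_{\tau}$ a.s., where $M_{\tau}(\omega)=M_{\tau(\omega)}(\omega)$. Consequently $W:=\vert\mathbb{E}(Y_{t_{0}}\vert\mathcal{F}_{\tau})-Y_{\tau}\vert=\vert M_{\tau}-Y_{\tau}\vert\le Z$ \emph{pointwise}, simply because $M_{\tau(\omega)}(\omega)-Y_{\tau(\omega)}(\omega)$ is one of the terms over which the supremum defining $Z$ ranges. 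Since $W$ is nonnegative and $\mathcal{F}_{\tau}$-measurable, one has $W=\mathbb{E}[W\vert\mathcal{F}_{\tau}]\le\mathbb{E}[Z\vert\mathcal{F}_{\tau}]$ by monotonicity of conditional expectation, which is exactly (\ref{eq: lemma inequality 1}).

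For part (2), I would introduce the first-passage time $\tau:=\inf\{u\in[s_{0},t_{0}]:\vert Y_{u}\vert\ge\lambda\}\wedge t_{0}$, which is a stopping time with $s_{0}\le\tau\le t_{0}$ because $Y$ is continuous and adapted and the filtration is right-continuous. Writing $A:=\{\sup_{u\in[s_{0},t_{0}]}\vert Y_{u}\vert\ge\lambda\}$, continuity makes $\{u:\vert Y_{u}\vert\ge\lambda\}$ closed, so on $A$ the infimum is attained and $\vert Y_{\tau}\vert\ge\lambda$, while on $A^{c}$ one has $\tau=t_{0}$ and $\vert Y_{t_{0}}\vert<\lambda$; hence $A=\{\vert Y_{\tau}\vert\ge\lambda\}\in\mathcal{F}_{\tau}$. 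From $\lambda\mathbf{1}_{A}\le\vert Y_{\tau}\vert\mathbf{1}_{A}$ I obtain $\lambda\mathbb{P}(A)\le\int_{A}\vert Y_{\tau}\vert\,d\mathbb{P}$. I then split $\vert Y_{\tau}\vert\le\vert M_{\tau}-Y_{\tau}\vert+\vert M_{\tau}\vert$, bound the first term by part (1) and the second by the conditional Jensen inequality $\vert M_{\tau}\vert=\vert\mathbb{E}(Y_{t_{0}}\vert\mathcal{F}_{\tau})\vert\le\mathbb{E}(\vert Y_{t_{0}}\vert\,\vert\,\mathcal{F}_{\tau})$, giving $\vert Y_{\tau}\vert\le\mathbb{E}[Z+\vert Y_{t_{0}}\vert\,\vert\,\mathcal{F}_{\tau}]$. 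Because $A\in\mathcal{F}_{\tau}$, integrating over $A$ and using the defining property of conditional expectation replaces the conditional expectation by $Z+\vert Y_{t_{0}}\vert$, which after dividing by $\lambda$ is precisely (\ref{eq: Maximal inequality loca inequality}).

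The algebraic splitting and the conditional Jensen step are routine; the two delicate points I would state with care are (a) the use of optional sampling to identify $\mathbb{E}(Y_{t_{0}}\vert\mathcal{F}_{\tau})$ with the c\`adl\`ag value $M_{\tau}$, which is what legitimizes the pointwise inequality $\vert M_{\tau}-Y_{\tau}\vert\le Z$, and (b) the verification that the maximal event $A$ is $\mathcal{F}_{\tau}$-measurable, which is what lets the conditional expectation from part (1) be integrated away against $\mathbf{1}_{A}$. I expect (a) to be the main obstacle, since it hinges on the regularization of $M$ under the usual conditions and on the fact that $Y_{\tau}$ need not be integrable, forcing all nonnegative estimates to be interpreted in $[0,\infty]$.
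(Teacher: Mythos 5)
Your proposal is correct, and part (2) follows the paper's argument almost verbatim: the same hitting time $\tau=\inf\{u\in[s_{0},t_{0}]:\vert Y_{u}\vert\geq\lambda\}\wedge t_{0}$, the same identification of the maximal event as an $\mathcal{F}_{\tau}$-set, the same splitting $\vert Y_{\tau}\vert\leq\vert\mathbb{E}(Y_{t_{0}}\vert\mathcal{F}_{\tau})-Y_{\tau}\vert+\vert\mathbb{E}(Y_{t_{0}}\vert\mathcal{F}_{\tau})\vert$ with part (1) and conditional Jensen, and the same integration over the $\mathcal{F}_{\tau}$-measurable event (you even repair the paper's small slip of writing $\wedge T$ and $\{\tau<T\}$ where $\wedge t_{0}$ is meant). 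Part (1), however, is where you genuinely diverge. The paper never invokes the optional sampling theorem: it reduces, ``by right continuity,'' to a stopping time taking countably many values $\{u_{k}\}$, and then works atom by atom on $\{\tau=u_{k}\}$, using the identity between $\mathcal{F}_{\tau}$ and $\mathcal{F}_{u_{k}}$ on that event together with the tower property to dominate each piece by $\mathbb{E}[\sup_{u}\vert\mathbb{E}(Y_{t_{0}}\vert\mathcal{F}_{u})-Y_{u}\vert\,\vert\,\mathcal{F}_{\tau}]1_{\{\tau=u_{k}\}}$; the approximation of a general $\tau$ by countably-valued ones, and the passage to the limit, are left implicit and are precisely where right-continuity and uniform integrability do the real work. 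You instead regularize the closed martingale $M_{u}=\mathbb{E}(Y_{t_{0}}\vert\mathcal{F}_{u})$ (as the paper itself does in Lemma \ref{lem: supremum p norm}), apply Doob's optional sampling theorem for the bounded stopping time $\tau$ to get $\mathbb{E}(Y_{t_{0}}\vert\mathcal{F}_{\tau})=M_{\tau}$ a.s., and then the inequality is a pointwise triviality $\vert M_{\tau}-Y_{\tau}\vert\leq Z$ combined with $W=\mathbb{E}[W\vert\mathcal{F}_{\tau}]\leq\mathbb{E}[Z\vert\mathcal{F}_{\tau}]$ for the nonnegative $\mathcal{F}_{\tau}$-measurable $W$. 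What your route buys is that the delicate limiting argument is packaged into a standard cited theorem, and your explicit convention of reading all estimates in $[0,\infty]$ cleanly handles the fact that neither $Y_{\tau}$ nor $Z$ is assumed integrable; what the paper's route buys is self-containedness, using only discrete conditional-expectation manipulations rather than the continuous-time optional sampling machinery. Both are valid proofs of the same inequality.
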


\begin{proof}
(1) By the right continuity of $Y_{t}$ and $\mathbb{E}(Y_{t_{0}}\vert\mathcal{F}_{t})$,
we may assume that $\tau$ takes only countably many values $\{u_{k}:k=1,2,\cdots\}\subset[s_{0},t_{0}].$
Then 
\begin{align*}
\left|\mathbb{E}(Y_{t_{0}}\vert\mathcal{F}_{\tau})-Y_{\tau}\right| & =\sum_{k=1}^{\infty}\left|\mathbb{E}(Y_{t_{0}}\vert\mathcal{F}_{\tau})-Y_{\tau}\right|1_{\{\tau=u_{k}\}}\\
 & =\sum_{k=1}^{\infty}\left|\mathbb{E}\left[\left.(Y_{t_{0}}-Y_{\tau})1_{\{\tau=u_{k}\}}\right|\sigma(\mathcal{F}_{\tau}\cap\{\tau=u_{k}\})\right]\right|\\
 & =\sum_{k=1}^{\infty}\left|\mathbb{E}\left[\left.\mathbb{E}\left((Y_{t_{0}}-Y_{u_{k}})\vert\mathcal{F}_{u_{k}}\right)1_{\{\tau=u_{k}\}}\right|\sigma(\mathcal{F}_{\tau}\cap\{\tau=u_{k}\})\right]\right|\\
 & \leq\sum_{k=1}^{\infty}\left|\mathbb{E}\left[\left.\left(\sup_{u\in[s_{0},t_{0}]}\vert\mathbb{E}(Y_{t_{0}}\vert\mathcal{F}_{u})-Y_{u})\vert\right)1_{\{\tau=u_{k}\}}\right|\sigma(\mathcal{F}_{\tau}\cap\{\tau=u_{k}\})\right]\right|\\
 & =\sum_{k=1}^{\infty}\mathbb{E}\left[\left.\sup_{u\in[s_{0},t_{0}]}\vert\mathbb{E}(Y_{t_{0}}\vert\mathcal{F}_{u})-Y_{u})\vert\right|\mathcal{F}_{\tau}\right]1_{\{\tau=u_{k}\}}\\
 & =\mathbb{E}\left[\left.\sup_{u\in[s_{0},t_{0}]}\vert\mathbb{E}(Y_{t_{0}}\vert\mathcal{F}_{u})-Y_{u})\vert\right|\mathcal{F}_{\tau}\right].
\end{align*}

(2) Let $\tau=\inf\{u\in[s_{0},t_{0}]:\vert Y_{u}\vert\geq\lambda\}\wedge T$.
Then $\{\sup_{u\in[s_{0},t_{0}]}\vert Y_{u}\vert\geq\lambda\}=\{\tau<T\}\cup\{\tau=t_{0},\vert Y_{t_{0}}\vert\geq\lambda\}\in\mathcal{F}_{\tau}$.
Therefore, by (\ref{eq: lemma inequality 1}) we have 
\begin{align*}
\int_{\{\sup_{u\in[s_{0},t_{0}]}\vert Y_{u}\vert\geq\lambda\}}\vert Y_{\tau}\vert d\mathbb{P} & \leq\int_{\{\sup_{u\in[s_{0},t_{0}]}\vert Y_{u}\vert\geq\lambda\}}\vert\mathbb{E}(Y_{t_{0}}\vert\mathcal{F}_{\tau})-Y_{\tau}\vert d\mathbb{P}+\int_{\{\sup_{u\in[s_{0},t_{0}]}\vert Y_{u}\vert\geq\lambda\}}\vert\mathbb{E}(Y_{t_{0}}\vert\mathcal{F}_{\tau})\vert d\mathbb{P}\\
 & \leq\int_{\{\sup_{u\in[s_{0},t_{0}]}\vert Y_{u}\vert\geq\lambda\}}\mathbb{E}\left[\left.\sup_{u\in[s_{0},t_{0}]}\vert\mathbb{E}(Y_{t_{0}}\vert\mathcal{F}_{u})-Y_{u})\vert\right|\mathcal{F}_{\tau}\right]+\mathbb{E}\left(\left.\vert Y_{t_{0}}\vert\right|\mathcal{F}_{\tau}\right)d\mathbb{P}\\
 & \leq\int_{\{\sup_{u\in[s_{0},t_{0}]}\vert Y_{u}\vert\geq\lambda\}}\left[\sup_{u\in[s_{0},t_{0}]}\vert\mathbb{E}(Y_{t_{0}}\vert\mathcal{F}_{u})-Y_{u}\vert+\vert Y_{t_{0}}\vert\right]d\mathbb{P}.
\end{align*}
Finally, using
\[
\mathbb{P}\left(\sup_{u\in[s_{0},t_{0}]}\vert Y_{u}\vert\geq\lambda\right)\leq\frac{1}{\lambda}\int_{\{\sup_{u\in[s_{0},t_{0}]}\vert Y_{u}\vert\geq\lambda\}}\vert Y_{\tau}\vert d\mathbb{P},
\]
and we complete the proof
\end{proof}
Now we are ready to prove Theorem \ref{Theorem: Doob Type Maximal Inequality}.
\begin{proof}
[Proof of Theorem \ref{Theorem: Doob Type Maximal Inequality}]Denote
$Z=\sup_{u\in[s_{0},t_{0}]}\vert\mathbb{E}(X_{t_{0}}\vert\mathcal{F}_{u})-X_{u}\vert+\vert X_{t_{0}}\vert$
and fix a $q\in(1,p]$. By (\ref{eq: Maximal inequality loca inequality})
and Lemma \ref{lem: supremum p norm}

\begin{align*}
\Vert X^{\ast}\Vert_{L^{q}}^{q} & =q\int_{0}^{\infty}\lambda^{q-1}\mathbb{P}(X^{\ast}\geq\lambda)d\lambda\\
 & \leq q\int_{0}^{\infty}\lambda^{q-2}\int_{\{X^{\ast}\geq\lambda\}}Zd\mathbb{P}d\lambda\\
 & =q\int_{\Omega}\left(\int_{0}^{X^{\ast}}\lambda^{q-2}d\lambda\right)Zd\mathbb{P}\\
 & =\frac{q}{q-1}\int_{\Omega}\vert X^{\ast}\vert^{q-1}Zd\mathbb{P}\\
 & \leq\frac{q}{q-1}\Vert X^{\ast}\Vert_{L^{q}}^{q-1}\Vert Z\Vert_{L^{q}}.
\end{align*}
Therefore, 
\begin{align*}
\Vert X^{\ast}\Vert_{L^{q}} & \leq\frac{q}{q-1}\Vert Z\Vert_{L^{q}}\\
 & \leq\frac{q}{q-1}\left[\left\Vert \sup_{u\in[s_{0},t_{0}]}\vert\mathbb{E}(X_{t_{0}}\vert\mathcal{F}_{u})-X_{u}\vert\right\Vert _{L^{q}}+\left\Vert X_{t_{0}}\right\Vert _{L^{q}}\right]\\
 & \leq\frac{q}{q-1}\left[C_{p,h}^{1/P}A_{p,h}^{1/p}\vert t_{0}-s_{0}\vert^{h}+\left\Vert X_{t_{0}}\right\Vert _{L^{q}}\right].
\end{align*}
\end{proof}

\subsection{Tail decay for the supremum}

Here, we shall show that the distribution of the supremum $\sup_{t\in[0,T]}\vert X_{t}\vert$
has $\alpha$-exponential decay if the margins of $\{X_{t}\}_{t\in[0,T]}$
have uniform $\alpha$-exponential decay as in the following definition.
\begin{defn}
Given $\alpha>0$, a continuous stochastic process $\{X_{t}\}_{t\in[0,T]}$
is said to have uniform $\alpha$-exponential marginal decay if there
exists constants $C_{1},C_{2}>0$ such that 
\begin{equation}
\mathbb{P}\left(\vert X_{t}\vert\geq\lambda\right)\leq C_{2}\exp\left(-C_{1}\lambda^{\alpha}\right),\qquad\mbox{ for all }\lambda>0\mbox{ and all }t\in[0,T].\label{eq: alpha exponential decay}
\end{equation}
\end{defn}

\begin{rem*}
Suppose 
\begin{equation}
\mathbb{P}\left(\vert X_{t}\vert\geq\lambda\right)\leq C_{2}\exp\left(-C_{1}\lambda^{\alpha}\right),\qquad\mbox{ for all }\lambda>M\label{eq:alpha exponential decay 2}
\end{equation}
with $M$ being a large enough constant. Since we always have $\mathbb{P}\left(\vert X_{t}\vert\geq\lambda\right)\leq1$,
actually (\ref{eq:alpha exponential decay 2}) implies (\ref{eq: alpha exponential decay})
for another pair of constants $(C_{1},C_{2})$. In the following,
we will always use (\ref{eq: alpha exponential decay}).
\end{rem*}
Notice that if $\{X_{t}\}_{t\in[0,T]}$ satisfies (\ref{eq: alpha exponential decay}),
one has 
\begin{equation}
\mathbb{E}(\vert X_{t}\vert^{q})\leq C_{2}C_{1}^{-q/\alpha}\Gamma(\frac{q}{\alpha}+1)\label{eq: Lq norm control}
\end{equation}
for any $q>0$. Now we state our theorem. 
\begin{thm}
\label{thm: Main theorem 1}Suppose stochastic process $\{X_{t}\}_{t\in[0,T]}$
satisfies condition (\ref{eq: conditional increment control}) with
parameter $(p,h)$ and has uniform $\alpha$-exponential marginal
decay. Then 
\[
\mathbb{P}\left(\sup_{t\in[0,T]}\vert X_{t}\vert\geq\lambda\right)\leq C\lambda^{-1/h}\exp\left[-\frac{C_{1}}{2^{\alpha+2}}\left(1-\frac{1}{ph}\right)\lambda^{\alpha}\right]
\]
for large enough $\lambda$, where $C$ depends on $(C_{1},C_{2},\alpha,p,h,A_{p,h}).$
\end{thm}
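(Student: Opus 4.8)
The plan is to reduce the global supremum to a sum of local suprema over a fine partition, control each local term by combining Lemma \ref{lem: lemma inequalities} with Lemma \ref{lem: supremum p norm}, and then optimize the mesh size $\delta$ as a function of $\lambda$. Concretely, fix a large $\lambda$, let $N\in\mathbb{N}$, set $\delta=T/N$ and $t_i=i\delta$, and write $p_i=\mathbb{P}(\sup_{u\in[t_{i-1},t_i]}|X_u|\geq\lambda)$ together with $\Delta_i=\sup_{u\in[t_{i-1},t_i]}|\mathbb{E}(X_{t_i}\vert\mathcal{F}_u)-X_u|$. A union bound gives $\mathbb{P}(\sup_{[0,T]}|X_t|\geq\lambda)\leq\sum_{i=1}^N p_i$, so it suffices to estimate each $p_i$ and then sum.

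For a single interval I would apply inequality (\ref{eq: Maximal inequality loca inequality}) with $Y=X$ and $[s_0,t_0]=[t_{i-1},t_i]$, obtaining $p_i\leq\frac{1}{\lambda}\int_{\{\sup\geq\lambda\}}(\Delta_i+|X_{t_i}|)\,d\mathbb{P}$. The endpoint term is split at the level $\lambda/2$: on $\{|X_{t_i}|<\lambda/2\}$ it contributes at most $\tfrac12 p_i$, which is absorbed into the left-hand side, while on $\{|X_{t_i}|\geq\lambda/2\}$ it is controlled by the uniform marginal decay (\ref{eq: alpha exponential decay}) and the moment bound (\ref{eq: Lq norm control}), producing a term of the form $C'\exp(-c_0\lambda^\alpha)$. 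The increment term is handled by H\"older's inequality and Lemma \ref{lem: supremum p norm}, giving $\frac{1}{\lambda}\int_{\{\sup\geq\lambda\}}\Delta_i\,d\mathbb{P}\leq\frac{1}{\lambda}\|\Delta_i\|_{L^p}\,p_i^{1-1/p}\leq\frac{(C_{p,h}A_{p,h})^{1/p}\delta^h}{\lambda}\,p_i^{1-1/p}$. Collecting these yields a self-improving inequality $\tfrac12 p_i\leq a\,p_i^{1-1/p}+b$ with $a\asymp\delta^h/\lambda$ and $b\asymp\exp(-c_0\lambda^\alpha)$, and the elementary implication that $x\leq ax^{1-1/p}+b$ forces $x\lesssim a^p+b$ gives $p_i\lesssim(\delta^h/\lambda)^p+\exp(-c_0\lambda^\alpha)$.

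Summing over the $N=T/\delta$ intervals produces $\mathbb{P}(\sup_{[0,T]}|X_t|\geq\lambda)\lesssim A\,\delta^{ph-1}+B\,\delta^{-1}$ with $A\asymp\lambda^{-p}$ and $B\asymp\exp(-c_0\lambda^\alpha)$. The final step is to minimize over $\delta$: since $ph>1$, the optimal choice is $\delta\asymp(B/A)^{1/(ph)}$, with minimum of order $A^{1/(ph)}B^{1-1/(ph)}$. Substituting $A$ and $B$ gives the polynomial prefactor $\lambda^{-p/(ph)}=\lambda^{-1/h}$ and the exponential factor $\exp(-c_0(1-\tfrac{1}{ph})\lambda^\alpha)$, which is exactly the claimed form; tracking the constant $c_0$ arising from the tail integral of $|X_{t_i}|$ over $\{|X_{t_i}|\geq\lambda/2\}$ then produces the stated rate $C_1/2^{\alpha+2}$. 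The minimizer $\delta$ tends to $0$ as $\lambda\to\infty$, consistent with the requirement that $\lambda$ be large and with the integrality of $N$ (absorbed by a ceiling).

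The main obstacle is the interplay between two competing scales. The increment bound from Lemma \ref{lem: supremum p norm} is only polynomial in $\lambda$, so the exponential decay of the supremum cannot come from it alone; it must be manufactured by forcing the mesh $\delta$ to be exponentially small in $\lambda$, at the price of an exponentially large number $N=T/\delta$ of intervals. The delicate point is that the $\delta$-optimization exactly balances the polynomial increment term against the exponentially small but $\delta^{-1}$-amplified endpoint term, and one must check that the interpolation $A^{1/(ph)}B^{1-1/(ph)}$ both preserves the exponential rate (up to the factor $1-\tfrac{1}{ph}$) and regenerates precisely the prefactor $\lambda^{-1/h}$. The self-absorption step at level $\lambda/2$, which exploits the conditional structure of (\ref{eq: Maximal inequality loca inequality}) rather than pathwise increments $|X_u-X_{t_i}|$, is what allows the entire scheme to close.
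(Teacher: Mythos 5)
Your proposal is correct, and it reaches the stated bound by a per-interval mechanism that is genuinely different from the paper's, even though the outer skeleton is the same: both proofs partition $[0,T]$ into $N$ subintervals, produce on each a polynomial increment term of size $\delta^{ph}\lambda^{-p}$ plus an exponentially small endpoint term, and then optimize the mesh, which is exactly where the interpolation $A^{1/(ph)}B^{1-1/(ph)}$ generates the prefactor $\lambda^{-1/h}$ and the factor $\left(1-\frac{1}{ph}\right)$. The difference is in how the exponential term is manufactured. The paper decomposes $\vert X_{t}\vert\leq\vert\mathbb{E}(X_{t_{n}}\vert\mathcal{F}_{t})-X_{t}\vert+\vert\mathbb{E}(X_{t_{n}}\vert\mathcal{F}_{t})\vert$ and proves a uniform exponential moment bound $\mathbb{E}\left[\exp\left(\frac{C_{1}}{4}\sup_{t\in I_{n}}\vert\mathbb{E}(X_{t_{n}}\vert\mathcal{F}_{t})\vert^{\alpha}\right)\right]\leq C$ by expanding the exponential as a power series and applying Doob's $L^{\beta}$ maximal inequality to each moment (splitting $\alpha q\leq\beta$ and $\alpha q>\beta$, the factorial from $\Gamma(q+1)$ being cancelled by the $1/q!$ of the series); Chebyshev then gives the tail bound (\ref{eq: main theorem local 3}) for the supremum of the conditional-expectation martingale. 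You avoid this computation entirely: you invoke Lemma \ref{lem: lemma inequalities}(2) — which the paper proves but uses only for Theorem \ref{Theorem: Doob Type Maximal Inequality} — split the endpoint $\vert X_{t_{i}}\vert$ at level $\lambda/2$ so that the small part is absorbed into $p_{i}$, control the large part using only the marginal tail at the single time $t_{i}$ (Cauchy--Schwarz with (\ref{eq: alpha exponential decay}) and (\ref{eq: Lq norm control})), and close via the self-improving inequality $p_{i}\leq 2ap_{i}^{1-1/p}+2b\Rightarrow p_{i}\lesssim a^{p}+b$, which replaces the paper's Chebyshev step (\ref{eq: main theorem local 2}). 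Your route is more elementary (no exponential-moment/power-series argument and no tail bound for a supremum of a martingale), and since your $\lambda/2$ splitting happens inside the integral rather than as the paper's final substitution of $2\lambda$ by $\lambda$, it delivers the slightly better exponent $\frac{C_{1}}{2^{\alpha+1}}\left(1-\frac{1}{ph}\right)$, which implies the stated one. What the paper's heavier argument buys in exchange is the standalone estimate (\ref{eq: main theorem local 3}), an exponential tail for $\sup_{t\in I_{n}}\vert\mathbb{E}(X_{t_{n}}\vert\mathcal{F}_{t})\vert$ uniform in $n$ and $N$, which is of independent interest.
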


\begin{proof}
For $N\in\mathbb{N}_{+}$, let $I_{n}=[t_{n-1},t_{n}]=[(n-1)T/N,nT/N]$,
$1\leq n\leq N$. Then 
\[
\left\{ \sup_{t\in[0,T]}\vert X_{t}\vert\geq2\lambda\right\} \subset\bigcup_{n=1}^{N}\left(\left\{ \sup_{t\in I_{n}}\vert\mathbb{E}(X_{t_{n}}\vert\mathcal{F}_{t})-X_{t}\vert\geq\lambda\right\} \bigcup\left\{ \sup_{t\in I_{n}}\vert\mathbb{E}(X_{t_{n}}\vert\mathcal{F}_{t})\vert\geq\lambda\right\} \right).
\]
Therefore
\begin{align}
\mathbb{P}\left(\sup_{t\in[0,T]}\vert X_{t}\vert\geq2\lambda\right) & \leq\sum_{n=1}^{N}\mathbb{P}\left(\sup_{t\in I_{n}}\vert\mathbb{E}(X_{t_{n}}\vert\mathcal{F}_{t})-X_{t}\vert\geq\lambda\right)+\sum_{n=1}^{N}\mathbb{P}\left(\sup_{t\in I_{n}}\vert\mathbb{E}(X_{t_{n}}\vert\mathcal{F}_{t})\vert\geq\lambda\right).\label{eq: main theorem local 1}
\end{align}
By Lemma \ref{lem: supremum p norm}, 
\begin{equation}
\mathbb{P}\left(\sup_{t\in I_{n}}\vert\mathbb{E}(X_{t_{n}}\vert\mathcal{F}_{t})-X_{t}\vert\geq\lambda\right)\leq C_{p,h}A_{p,h}\lambda^{-p}\left(\frac{T}{N}\right)^{ph}.\label{eq: main theorem local 2}
\end{equation}
Next we need to estimate $\mathbb{P}\left(\sup_{t\in I_{n}}\vert\mathbb{E}(X_{t_{n}}\vert\mathcal{F}_{t})\vert\geq\lambda\right)$.
Notice that 
\[
\mathbb{E}\left[\exp\left(\frac{C_{1}}{4}\sup_{t\in I_{n}}\vert\mathbb{E}(X_{t_{n}}\vert\mathcal{F}_{t})\vert^{\alpha}\right)\right]=\sum_{q=0}^{\infty}\frac{(C_{1}/4)^{q}}{q!}\mathbb{E}\left(\sup_{t\in I_{n}}\vert\mathbb{E}(X_{t_{n}}\vert\mathcal{F}_{t})\vert^{\alpha q}\right).
\]
Here we fix an arbitrary constant $\beta>1$. When $\alpha q\leq\beta$,
by Doob's maximal inequality and (\ref{eq: Lq norm control}), we
have that 
\begin{align*}
\mathbb{E}\left(\sup_{t\in I_{n}}\vert\mathbb{E}(X_{t_{n}}\vert\mathcal{F}_{t})\vert^{\alpha q}\right) & \leq\mathbb{E}\left(\sup_{t\in I_{n}}\vert\mathbb{E}(X_{t_{n}}\vert\mathcal{F}_{t})\vert^{\beta}\right)^{\frac{\alpha q}{\beta}}\\
 & \leq\left(\frac{\beta}{\beta-1}\right)^{\beta\cdot\frac{\alpha q}{\beta}}\mathbb{E}\left(\vert X_{t_{n}}\vert^{\beta}\right)^{\frac{\alpha q}{\beta}}\\
 & \leq\left(\frac{\beta}{\beta-1}\right)^{\alpha q}\left(C_{2}C_{1}^{-\beta/\alpha}\Gamma(\frac{\beta}{\alpha}+1)\right)^{\frac{\alpha q}{\beta}}.
\end{align*}
When $\alpha q>\beta$, again by Doob's maximal inequality and (\ref{eq: Lq norm control}),
we have that 
\begin{align*}
\mathbb{E}\left(\sup_{t\in I_{n}}\vert\mathbb{E}(X_{t_{n}}\vert\mathcal{F}_{t})\vert^{\alpha q}\right) & \leq\mathbb{E}\left(\sup_{t\in I_{n}}\vert\mathbb{E}(\vert X_{t_{n}}\vert^{\frac{\alpha q}{\beta}}\vert\mathcal{F}_{t})\vert^{\beta}\right)\\
 & \leq\left(\frac{\beta}{\beta-1}\right)^{\beta}\mathbb{E}\left(\vert X_{t_{n}}\vert^{\alpha q}\right)\\
 & \leq\left(\frac{\beta}{\beta-1}\right)^{\beta}C_{2}C_{1}^{-q}\Gamma(q+1).
\end{align*}
Therefore
\begin{align*}
\mathbb{E}\left[\exp\left(\frac{C_{1}}{4}\sup_{t\in I_{n}}\vert\mathbb{E}(X_{t_{n}}\vert\mathcal{F}_{t})\vert^{\alpha}\right)\right] & \leq\sum_{q=0}^{\lfloor\frac{\beta}{\alpha}\rfloor}\frac{(C_{1}/4)^{q}}{q!}\left(\frac{\beta}{\beta-1}\right)^{\alpha q}\left(C_{2}C_{1}^{-\beta/\alpha}\Gamma(\frac{\beta}{\alpha}+1)\right)^{\frac{\alpha q}{\beta}}\\
 & \quad+\sum_{q=\lfloor\frac{\beta}{\alpha}\rfloor+1}^{\infty}\frac{(C_{1}/4)^{q}}{q!}\left(\frac{\beta}{\beta-1}\right)^{\beta}C_{2}C_{1}^{-q}\Gamma(q+1)\\
 & \leq C+\sum_{q=\lfloor\frac{\beta}{\alpha}\rfloor+1}^{\infty}4^{-q}\left(\frac{\beta}{\beta-1}\right)^{\beta}C_{2}\\
 & \leq C,
\end{align*}
where the constant C depends on $(\alpha,\beta,C_{1},C_{2})$. By
Chebyshev's inequality, 
\begin{equation}
\mathbb{P}\left(\sup_{t\in I_{n}}\vert\mathbb{E}(X_{t_{n}}\vert\mathcal{F}_{t})\vert\geq\lambda\right)\leq C\exp\left(-\frac{C_{1}}{4}\lambda^{\alpha}\right).\label{eq: main theorem local 3}
\end{equation}
Hence, for any $N\in\mathbb{N}_{+}$, by (\ref{eq: main theorem local 2})
and (\ref{eq: main theorem local 3}), we have that
\begin{align*}
\mathbb{P}\left(\sup_{t\in[0,T]}\vert X_{t}\vert\geq2\lambda\right) & \leq NC_{p,h}A_{p,h}\lambda^{-p}\left(\frac{T}{N}\right)^{ph}+NC\exp\left(-\frac{C_{1}}{4}\lambda^{\alpha}\right)\\
 & =:EN^{1-ph}+FN,
\end{align*}
where $E=C_{p,h}A_{p,h}T^{ph}\lambda^{-p}$ and $F=C\exp\left(-\frac{C_{1}}{4}\lambda^{\alpha}\right)$.
When $\lambda$ is large enough, notice that $E\gg F$ and $E/F$
is large enough. Then we can set $N$ to be the greatest integer less
than or equal to $\left(\frac{E}{F}\right)^{\frac{1}{ph}}$ to obtain
that
\begin{align*}
\mathbb{P}\left(\sup_{t\in[0,T]}\vert X_{t}\vert\geq2\lambda\right) & \leq CE^{\frac{1}{ph}}F^{1-\frac{1}{ph}}\\
 & =CC_{p,h}^{\frac{1}{ph}}A_{p,h}^{\frac{1}{ph}}T\lambda^{-\frac{1}{h}}\exp\left(-\frac{C_{1}}{4}\left(1-\frac{1}{ph}\right)\lambda^{\alpha}\right),
\end{align*}
where $C$ depends on $C_{1}$, $C_{2}$, $p$, $h$ and $\alpha$.
Finally, replace $2\lambda$ by $\lambda$ and the proof is complete.
\end{proof}

\section{SDEs with singular drift\label{sec:SDEs}}

In this section, we apply the results in the previous section to solve
SDEs 
\begin{equation}
dX_{t}=b(t,X_{t})dt+dB_{t},\label{eq: SDE}
\end{equation}
where divergence-free vector field $b\in L^{l}(0,T;L^{q}(\rd))\cap L^{1}(0,T;W^{1,p}(\rd))$
with $\frac{2}{l}+\frac{d}{q}=\gamma\in[1,2)$, $d\geq3$ and $p\geq1$.
We use the idea of approximation and \textit{a priori} estimates to
show that the approximation sequence converges in probability. Together
with the $L^{k}(\Omega\times B_{r};C([0,T]))$ bound of the approximation
sequence, we have that the sequence converges in $L^{k}(\Omega\times B_{r};C([0,T]))$
for any $k\geq1$. Here $B_{r}$ is the ball in $\rd$ of radius $r$
and center at the origin.

\subsection{Supremum of solutions to SDEs}

To control the supremum $\sup_{t\in[0,T]}\vert X_{t}\vert$, we will
need the following Aronson type estimate of the transition probability
of $X_{t}$ proved in \cite[Corollary 9]{qian2017parabolic}.
\begin{thm}
Suppose $b$ is divergence-free and $b\in L^{l}(0,T;L^{q}(\rd))$
for some $d\geq3$, $l>1$ and $q>\frac{d}{2}$ such that $\frac{2}{l}+\frac{d}{q}=\gamma\in[1,2)$.
In addition, we assume that $b$ is smooth with bounded derivatives.
If $\mu:=\frac{2}{2-\gamma+\frac{2}{l}}>1$, the transition probability
has upper bound 
\[
\Gamma(t,x;\tau,\xi)\leq\begin{cases}
\frac{C_{1}}{(t-\tau)^{d/2}}\exp\left(-\frac{1}{C_{2}}\left(\frac{\vert x-\xi\vert^{2}}{t-\tau}\right)\right) & \frac{\vert x-\xi\vert^{\mu-2}}{(t-\tau)^{\mu-\nu-1}}<1\\
\frac{C_{1}}{(t-\tau)^{d/2}}\exp\left(-\frac{1}{C_{2}}\left(\frac{\vert x-\xi\vert^{\mu}}{(t-\tau)^{\nu}}\right)^{\frac{1}{\mu-1}}\right) & \frac{\vert x-\xi\vert^{\mu-2}}{(t-\tau)^{\mu-\nu-1}}\geq1,
\end{cases}
\]
where $\nu=\frac{2-\gamma}{2-\gamma-\frac{2}{l}}$, $\Lambda=\Vert b\Vert_{L^{l}(0,T;L^{q}(\rd))}$,
$C_{1}=C_{1}(l,q,d)$, $C_{2}=C_{2}(l,q,d,\Lambda)$. If $\mu=1$,
which implies $q=\infty$, we have 
\[
\Gamma(t,x;\tau,\xi)\leq\frac{C_{1}}{(t-\tau)^{d/2}}\exp\left(-\frac{(C_{1}\Lambda(t-\tau)^{\nu}-\vert x-\xi\vert)^{2}}{4C_{1}(t-\tau)}\right).
\]
\end{thm}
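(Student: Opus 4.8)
The plan is to regard $\Gamma(t,x;\tau,\xi)$, for fixed $(\tau,\xi)$, as the fundamental solution of the Fokker--Planck equation $\partial_t\Gamma=\frac12\Delta_x\Gamma-\divg(b\,\Gamma)$, which, because $\divg b=0$, reduces to the advection--diffusion equation $\partial_t\Gamma=\frac12\Delta_x\Gamma-b\cdot\nabla_x\Gamma$. Equivalently I would work with the backward semigroup $P_{\tau,t}f(\xi)=\int\Gamma(t,x;\tau,\xi)f(x)\,dx$ and read off the pointwise kernel bound from weighted $L^1\to L^\infty$ estimates on $P_{\tau,t}$. The entire argument exploits the fact that the divergence-free structure renders the drift invisible to the basic $L^2$ energy identity, so that the on-diagonal behaviour is that of the free heat semigroup and the drift only reshapes the off-diagonal tail.

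First I would establish the on-diagonal bound $\Gamma(t,x;\tau,\xi)\le C_1(t-\tau)^{-d/2}$. Testing the equation against $\Gamma$, the drift contribution $\int b\cdot\nabla\Gamma\,\Gamma=\frac12\int b\cdot\nabla(\Gamma^2)=-\frac12\int(\divg b)\Gamma^2=0$ vanishes, so the energy dissipation is identical to that of Brownian motion. Nash's inequality then yields ultracontractivity $\|P_{\tau,t}\|_{L^1\to L^\infty}\le C(t-\tau)^{-d/2}$ with a constant depending only on $d$, which explains why the prefactor $C_1=C_1(l,q,d)$ is independent of $\Lambda$.

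Next I would run Davies' method (the integrated maximum principle). Fix a Lipschitz weight $\psi$, set $v=e^{\psi}u$ for $u(s)=P_{s,t}f$, and differentiate $\|v(s)\|_{L^2}^2$ in time. The diffusion contributes the standard $-\|\nabla v\|_{L^2}^2+\int|\nabla\psi|^2v^2$, and the advection term, after integrating by parts and using $\divg b=0$ once more, reduces to $\int(b\cdot\nabla\psi)v^2$. The crucial step is to dominate this drift term: Hölder in space against $b(s,\cdot)\in L^q$ followed by the Gagliardo--Nirenberg inequality applied to $v$ (here $q>d/2$ is exactly what makes the relevant Sobolev exponent admissible) bounds it by $\varepsilon\|\nabla v\|_{L^2}^2$ plus a lower-order term, while integrability in time from $b\in L^l$ and the relation $\frac2l+\frac dq=\gamma<2$ keep the resulting Gronwall constant finite. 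Combining the weighted $L^2\to L^2$ bound so obtained with the on-diagonal ultracontractivity through the usual Davies--Gaffney interpolation gives
\[
\Gamma(t,x;\tau,\xi)\le\frac{C_1}{(t-\tau)^{d/2}}\exp\left(\psi(\xi)-\psi(x)+R\right),
\]
where $R$ collects the drift-induced correction from the Gronwall step.

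The main obstacle, and the origin of the unusual exponents $\mu$ and $\nu$, is the optimization over the admissible weights $\psi$. For the free heat kernel one takes $\psi$ linear and optimizes over its slope $\rho$, recovering the Gaussian tail $\exp(-c|x-\xi|^2/(t-\tau))$; this is the regime $\frac{|x-\xi|^{\mu-2}}{(t-\tau)^{\mu-\nu-1}}<1$, in which the correction $R$ stays dominated by the diffusive term. In the complementary regime $R$ becomes comparable to or larger than the diffusive Gaussian, and balancing the two via a Legendre-transform-type minimization in $\rho$ replaces the quadratic profile by the anomalous one $\left(\frac{|x-\xi|^{\mu}}{(t-\tau)^{\nu}}\right)^{1/(\mu-1)}$; the exponents $\mu=\frac{2}{2-d/q}=\frac{2}{2-\gamma+2/l}$ and $\nu=\frac{2-\gamma}{2-\gamma-2/l}$ emerge precisely from matching the spatial scaling set by $q$ against the temporal scaling set by $l$, and the threshold $\frac{|x-\xi|^{\mu-2}}{(t-\tau)^{\mu-\nu-1}}=1$ is where the two contributions cross. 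I expect executing this optimization cleanly, and tracking the constant $C_2=C_2(l,q,d,\Lambda)$ in the exponent, to be the technical heart. The degenerate case $\mu=1$ (that is, $q=\infty$) is treated separately: with $b\in L^\infty$ the drift correction is simply the deterministic shift $C_1\Lambda(t-\tau)^\nu$, which recenters the Gaussian and produces the stated bound $\exp\left(-\frac{(C_1\Lambda(t-\tau)^\nu-|x-\xi|)^2}{4C_1(t-\tau)}\right)$.
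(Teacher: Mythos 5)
The first thing to note is that the paper contains no proof of this theorem: it is imported verbatim from Qian and Xi \cite[Corollary 9]{qian2017parabolic}, and only a remark on generalizations follows it. So your proposal can only be compared with the cited proof, and there your strategy is the standard route to such bounds and, as far as the comparison can be made, the one taken in that work: the divergence-free structure kills the drift in the unweighted $L^{2}$ energy identity, Nash iteration gives the on-diagonal factor $C(t-\tau)^{-d/2}$ with constant independent of $\Lambda$, and the off-diagonal decay comes from an exponential-weight (Aronson/Davies) energy estimate in which $\int(b\cdot\nabla\psi)v^{2}$ is handled by H\"older against $\Vert b(s)\Vert_{L^{q}}$, Gagliardo--Nirenberg (this is exactly where $q>d/2$ enters), Young, and Gronwall (this is where $\gamma<2$ enters), followed by optimization over the slope of $\psi$. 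All of these ingredients are correctly identified in your sketch.

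Two substantive comments. First, the assembly step you call ``Davies--Gaffney interpolation'' is not literally available: a weighted $L^{2}\to L^{2}$ bound combined with unweighted ultracontractivity does not produce a pointwise kernel bound. One must run the Nash (or Moser) iteration for the conjugated evolution $v=e^{\psi}u$ itself --- which works because, after absorbing the drift term, a fixed fraction of $-\Vert\nabla v\Vert_{L^{2}}^{2}$ survives --- and then get $L^{2}\to L^{\infty}$ by duality, using that the adjoint drift is $-b$, again divergence-free; this is where the real work of the cited proof lies. Second, you assert the exponents rather than derive them, and deriving them is informative: Young's inequality gives the drift correction $C\left(\rho\Vert b(s)\Vert_{L^{q}}\right)^{\mu}\Vert v\Vert_{L^{2}}^{2}$ with $\mu=\frac{2q}{2q-d}$, H\"older in time (legitimate since $\mu\leq l$, which is equivalent to $\gamma\leq2$) turns its time integral into $C\Lambda^{\mu}\rho^{\mu}(t-\tau)^{1-\mu/l}$, and minimizing $-\rho\vert x-\xi\vert+C\rho^{2}(t-\tau)+C\Lambda^{\mu}\rho^{\mu}(t-\tau)^{1-\mu/l}$ over $\rho$ yields the anomalous tail $\exp\bigl(-\frac{1}{C}\bigl(\frac{\vert x-\xi\vert^{\mu}}{(t-\tau)^{\nu}}\bigr)^{1/(\mu-1)}\bigr)$ with $\nu=1-\frac{\mu}{l}=\frac{2-\gamma}{2-\gamma+\frac{2}{l}}$, i.e.\ with a \emph{plus} sign in the denominator, not the minus sign printed in the statement. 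The plus-sign value is the correct one: it is the one consistent with the stated crossover condition $\vert x-\xi\vert^{\mu-2}=(t-\tau)^{\mu-\nu-1}$ between the Gaussian and anomalous regimes, and it matches the paper's own later use of the theorem, where the proof of Proposition \ref{prop: lp norm and exponential decay} takes $\mu=1$, $q=\infty$ and $\nu=\frac{2-\gamma}{2}\in(0,\frac{1}{2}]$; the printed formula would instead give the meaningless value $\frac{l-1}{l-2}\leq0$ for $l\in(1,2]$. So your route, carried to completion, both proves the theorem and exposes a typo in $\nu$; as written, however, the proposal stops exactly short of the computation that constitutes its technical heart.
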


\begin{rem}
This theorem is actually true for diffusion processes corresponding
to parabolic equations 
\[
\partial_{t}u(t,x)-\sum_{i,j=1}^{d}\partial_{j}(a_{ij}(t,x)\partial_{i}u(t,x))+\sum_{i=1}^{d}b_{i}(t,x)\partial_{i}u(t,x)=0
\]
for uniformly elliptic $\{a_{ij}\}$, $b\in L^{l}(0,T;L^{q}(\rd))$
with $\frac{2}{l}+\frac{d}{q}=\gamma\in[1,2)$ and $\divg b\in L^{l'}(0,T;L^{q'}(\rd))$
with $\frac{2}{l'}+\frac{d}{q'}=\gamma'\in[1,2)$. The proof follows
the idea in \cite{qian2017parabolic} with small modification.
\end{rem}

This upper bound estimate of the transition probability implies the
following. 
\begin{prop}
\label{prop: lp norm and exponential decay}Suppose $b$ is a smooth
divergence-free vector field with bounded derivatives and $b\in L^{l}(0,T;L^{q}(\rd))$
for some $d\geq3$, $l>1$ and $q>\frac{d}{2}$ such that $1\leq\gamma<2$.
Then the solution $\{X_{t}\}_{t\in[0,T]}$ to (\ref{eq: SDE}) satisfies
that 
\begin{equation}
\mathbb{E}\vert X_{t}-X_{s}\vert^{p}\leq C\vert t-s\vert^{\frac{(2-\gamma)(p+d)-d}{2}}\label{eq: moment estimate}
\end{equation}
 for $0\leq s<t\leq T$. Moreover, there exists a constant $\alpha>1$
depending on $(l,q,d)$ such that 
\begin{equation}
\mathbb{P}(\vert X_{t}-X_{s}\vert>\lambda)\leq C\exp(-C\lambda^{\alpha})\label{eq: exponential decay}
\end{equation}
for large enough $\lambda$ and $0\leq s<t\leq T$. The constant $C$
depends on $(l,q,d,\Lambda)$.
\end{prop}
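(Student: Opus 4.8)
The plan is to transfer both assertions to the transition density $\Gamma$ and then to two explicit radial integrals. Since $\{X_t\}$ is the diffusion associated with \eqref{eq: SDE} it is Markov with transition density $\Gamma(t,\cdot\,;s,\cdot)$, so conditioning on $\mathcal{F}_s$ (equivalently, on $X_s$) the law of $X_t$ has density $\Gamma(t,\cdot\,;s,X_s)$. Hence for any nonnegative Borel $f$,
\[
\mathbb{E}\bigl[f(X_t-X_s)\bigr]=\mathbb{E}\!\left[\int_{\rd}f(x-X_s)\,\Gamma(t,x;s,X_s)\,dx\right]\le\sup_{\xi\in\rd}\int_{\rd}f(x-\xi)\,\Gamma(t,x;s,\xi)\,dx,
\]
because the outer average is taken against the probability law of $X_s$. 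The key point is that the upper bound of the preceding theorem depends on $(x,\xi)$ only through $|x-\xi|$ and on $(t,s)$ only through $\tau:=t-s$; writing $y=x-\xi$ it becomes a radial profile $G(|y|,\tau)$ independent of $\xi$. Taking $f(z)=|z|^p$ and $f(z)=\mathbf{1}_{\{|z|\ge\lambda\}}$ therefore reduces \eqref{eq: moment estimate} and \eqref{eq: exponential decay} to the deterministic estimates $\mathbb{E}|X_t-X_s|^p\le\int_{\rd}|y|^p\,G(|y|,\tau)\,dy$ and $\mathbb{P}(|X_t-X_s|\ge\lambda)\le\int_{\{|y|\ge\lambda\}}G(|y|,\tau)\,dy$.

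I would evaluate these integrals in polar coordinates by splitting the radial variable at the crossover radius $r_\ast=\tau^{(\mu-\nu-1)/(\mu-2)}$, where the bound switches between its Gaussian branch $\exp(-|y|^2/(C_2\tau))$ and its stretched-exponential branch $\exp(-\tfrac{1}{C_2}(|y|^{\mu}/\tau^{\nu})^{1/(\mu-1)})$. On each piece I rescale $|y|$ to the corresponding self-similar variable, $|y|=\tau^{1/2}z$ in the Gaussian region and $|y|=\tau^{\nu/\mu}z$ in the stretched region; in both cases the angular integration, the Jacobian, and the factor $|y|^{p}$ combine with the prefactor $\tau^{-d/2}$ to give an explicit power of $\tau$ times a convergent integral $\int z^{p+d-1}e^{-cz^{\sigma}}\,dz$ controlled by the Gamma function. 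A short computation gives $\mu=\tfrac{2}{2-d/q}$, and the relevant time scale is $\tau^{(2-\gamma)/2}$: the elementary inequalities $\tfrac{\nu}{\mu}\ge\tfrac{2-\gamma}{2}$ and $\tfrac12\ge\tfrac{2-\gamma}{2}$ (valid in the admissible range) show that both intrinsic scales of $G$ are bounded above by $\tau^{(2-\gamma)/2}$, so that $\int_{\rd}|y|^pG\,dy\le C\,\tau^{-d/2}\bigl(\tau^{(2-\gamma)/2}\bigr)^{p+d}=C\,\tau^{[(2-\gamma)(p+d)-d]/2}$, which is exactly \eqref{eq: moment estimate}.

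For the tail \eqref{eq: exponential decay} the argument is cleaner, since for large $\lambda$ and $\tau\le T$ the shell $\{|y|\ge\lambda\}$ lies in the far-field branch. The substitution $|y|=\tau^{\nu/\mu}z$ reduces $\int_{\{|y|\ge\lambda\}}G\,dy$ to an incomplete-Gamma integral $\int_{\lambda\tau^{-\nu/\mu}}^{\infty}z^{d-1}e^{-z^{\alpha}/C_2}\,dz$ with $\alpha=\mu/(\mu-1)$, and the standard estimate $\int_w^\infty z^{d-1}e^{-z^\alpha/C_2}\,dz\lesssim w^{d-\alpha}e^{-w^\alpha/C_2}$ together with $\tau\le T$ produces $C\exp(-C\lambda^{\alpha})$. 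From $\mu=\tfrac{2}{2-d/q}$ one gets $\alpha=\mu/(\mu-1)=2q/d$; when the far-field branch is instead Gaussian (the case $q\ge d$, i.e. $\mu\le2$) one simply takes $\alpha=2$. In every case $\alpha=\min\{2,2q/d\}>1$ since $q>d/2$, and $\alpha$ depends only on $(l,q,d)$, as required.

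The main obstacle is the regime bookkeeping in the moment integral. Which branch supplies the dominant power of $\tau$, and whether the crossover radius $r_\ast$ lies inside or outside the decay region, switches according to the signs of $\mu-2$, $\nu$, and $\mu-\nu-1$, that is, essentially according to whether $q<d$ or $q>d$; one must verify in each case that the collected exponent is bounded below by $[(2-\gamma)(p+d)-d]/2$ rather than accidentally exceeding it. Two further points need care: checking that the constant in the exponent of the tail bound is bounded below uniformly for $\tau\in(0,T]$, which uses $\tau\le T$ together with the monotone improvement of the decay as $\tau\downarrow 0$; and treating the degenerate case $\mu=1$ (that is $q=\infty$), where the preceding theorem gives a genuinely Gaussian bound and the same computations apply with $\alpha=2$.
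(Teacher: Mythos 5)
Your proposal is correct and follows essentially the same route as the paper: reduce via the Markov property (the paper's ``WLOG $s=0$, $X_{0}=0$'') to radial integrals against the two branches of the Aronson-type bound, evaluate them by self-similar rescaling with the identity $\nu/\mu=(2-\gamma)/2$, and treat $\mu>1$ and $\mu=1$ separately. The only real difference is that the paper avoids the ``regime bookkeeping'' you call the main obstacle: since each branch is an upper bound on its own region, one can bound $\Gamma$ everywhere by the \emph{sum} of the two branches and integrate each over all of $\rd$ (resp. over $\{\vert x\vert>\lambda\}$), so no splitting at the crossover radius is needed.
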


\begin{proof}
Without loss of generality, we may take $s=0$ and $X_{0}=0$. When
$\mu>1$, we have 
\begin{align*}
\mathbb{E}\vert X_{t}-X_{0}\vert^{p} & =\int_{\rd}\vert x\vert^{p}\Gamma(t,x;0,0)dx\\
 & \leq\int_{\rd}\vert x\vert^{p}\frac{C_{1}}{t^{d/2}}\exp\left(-\frac{1}{C_{2}}\left(\frac{\vert x\vert^{2}}{t}\right)\right)dx+\int_{\rd}\vert x\vert^{p}\frac{C_{1}}{t^{d/2}}\exp\left(-\frac{1}{C_{2}}\left(\frac{\vert x\vert^{\mu}}{t^{\nu}}\right)^{\frac{1}{\mu-1}}\right)dx\\
 & \leq C\left(t^{\frac{p}{2}}+t^{\frac{(2-\gamma)(p+d)-d}{2}}\right)\\
 & \leq Ct^{\frac{(2-\gamma)(p+d)-d}{2}}.
\end{align*}
When $\mu=1$, following similar argument, we have that 
\begin{align*}
\mathbb{E}\vert X_{t}-X_{0}\vert^{p} & =\int_{\rd}\vert x\vert^{p}\Gamma(t,x;0,0)dx\\
 & \leq\int_{\vert x\vert\geq Ct^{\nu}}\vert x\vert^{p}\Gamma(t,x;0,0)dx+\int_{\vert x\vert<Ct^{\nu}}\vert x\vert^{p}\Gamma(t,x;0,0)dx\\
 & \leq\int_{\rd}\vert x\vert^{p}\frac{C_{1}}{t^{d/2}}\exp\left(-\frac{C\vert x\vert^{2}}{4C_{1}t}\right)dx+\int_{\vert x\vert<Ct^{\nu}}\vert x\vert^{p}\frac{C_{1}}{t^{d/2}}dx\\
 & \leq C\left(t^{\frac{p}{2}}+t^{\frac{(2-\gamma)(p+d)-d}{2}}\right)\\
 & \leq Ct^{\frac{(2-\gamma)(p+d)-d}{2}}.
\end{align*}
Now we prove the uniform $\alpha$-exponential marginal decay for
$X_{t}$. When $\mu>1$, we have that 
\begin{align*}
\mathbb{P}(\vert X_{t}\vert>\lambda) & \leq\int_{\vert x\vert>\lambda}\frac{C_{1}}{t^{d/2}}\exp\left(-\frac{1}{C_{2}}\left(\frac{\vert x\vert^{2}}{t}\right)\right)dx+\int_{\vert x\vert>\lambda}\frac{C_{1}}{t^{d/2}}\exp\left(-\frac{1}{C_{2}}\left(\frac{\vert x\vert^{\mu}}{t^{\nu}}\right)^{\frac{1}{\mu-1}}\right)dx\\
 & =C_{1}\int_{\vert x\vert>\lambda t^{-\frac{1}{2}}}\exp\left(-\frac{\vert x\vert^{2}}{C_{2}}\right)dx+\frac{C_{1}}{t^{d(\gamma-1)/2}}\int_{\vert x\vert>\lambda t^{-\frac{2-\gamma}{2}}}\exp\left(-\frac{\vert x\vert^{\frac{\mu}{\mu-1}}}{C_{2}}\right)dx\\
 & \leq C\exp(-C(\lambda t^{-\frac{1}{2}})^{2})+\frac{C}{t^{d(\gamma-1)/2}}\exp\left(-C(\lambda t^{-\frac{2-\gamma}{2}})^{\frac{\mu}{\mu-1}}\right)\\
 & \leq C\exp(-C\lambda^{2})+C\exp(-C\lambda^{\frac{\mu}{\mu-1}})
\end{align*}
for $0<t\leq T$ and large enough $\lambda$. Similarly, when $\mu=1$,
we have 
\begin{align*}
\mathbb{P}(\vert X_{t}\vert>\lambda) & \leq\int_{\vert x\vert>\lambda}\frac{C_{1}}{t^{d/2}}\exp\left(-\frac{(C_{1}\Lambda t^{\nu}-\vert x\vert)^{2}}{4C_{1}t}\right)dx\\
 & \leq C_{1}\int_{\vert x\vert>\lambda t^{-\frac{1}{2}}}\exp\left(-C(Ct^{\nu-\frac{1}{2}}-\vert x\vert)^{2}\right)dx.
\end{align*}
Recall that $\mu=1$ implies $q=\infty$ and hence $\nu=\frac{2-\gamma}{2}\in(0,\frac{1}{2}]$.
For large enough $\lambda$, we have $\vert x\vert>\lambda t^{-\frac{1}{2}}\gg Ct^{\nu-\frac{1}{2}}$
and
\begin{align*}
\mathbb{P}(\vert X_{t}\vert>\lambda) & \leq C_{1}\int_{\vert x\vert>\lambda t^{-\frac{1}{2}}}\exp\left(-C\vert x\vert^{2}\right)\\
 & \leq C\exp(-C\lambda^{2}).
\end{align*}
\end{proof}
Now we can apply Theorem \ref{thm: Main theorem 1} to obtain the
following result.
\begin{prop}
\label{prop: supremum estimate of diffusion}Suppose $b$ is a smooth
divergence-free vector field with bounded derivatives and $b\in L^{l}(0,T;L^{q}(\rd))$
for some $d\geq3$, $l>1$ and $q>\frac{d}{2}$ such that $1\leq\gamma<2$.
Then the solution $\{X_{t}\}_{t\in[0,T]}$ to (\ref{eq: SDE}) satisfies
that 
\begin{equation}
\mathbb{P}\left(\sup_{t\in[0,T]}\vert X_{t}-X_{0}\vert>\lambda\right)\leq C\exp(-C\lambda^{\alpha})\label{eq: exponential decay supremum}
\end{equation}
with the same $\alpha$ as in Proposition \ref{prop: lp norm and exponential decay}.
Moreover, for any $p\geq1$ we have that 
\begin{equation}
\mathbb{E}\left[\sup_{t\in[0,T]}\vert X_{t}-X_{0}\vert^{p}\right]<C,\label{eq: moment estimate supremum}
\end{equation}
where $C$ depends on $l$, $q$, $p$, $d$ and $\Vert b\Vert_{L^{l}(0,T;L^{q}(\rd))}$.
\end{prop}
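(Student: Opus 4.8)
The plan is to reduce everything to Theorem \ref{thm: Main theorem 1} applied to the recentered process $Y_t := X_t - X_0$, whose two hypotheses are both supplied by Proposition \ref{prop: lp norm and exponential decay}. The only real work is selecting an admissible pair $(p,h)$.

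First I would fix the exponent. Writing $h(p) = \frac{(2-\gamma)(p+d)-d}{2p}$, the moment bound (\ref{eq: moment estimate}) reads $\mathbb{E}|Y_t-Y_s|^p = \mathbb{E}|X_t-X_s|^p \le C|t-s|^{p\,h(p)}$. Since $\gamma\in[1,2)$ gives $2-\gamma>0$, one has $p\,h(p)=\frac{(2-\gamma)(p+d)-d}{2}\to\infty$ as $p\to\infty$, while $h(p)\to\frac{2-\gamma}{2}\in(0,\tfrac12]$; moreover $h(p)\le 1$ holds for every $p$ because $d(1-\gamma)\le 0\le \gamma p$. Hence I can choose $p$ large enough that $p>1$, $0<h(p)\le 1$ and $p\,h(p)>1$ simultaneously. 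With this choice the estimate (\ref{eq: moment estimate}) is exactly the unconditional control (\ref{eq: conditional increment 2}), and, as recorded in the third bullet of Section 2, Jensen's inequality upgrades (\ref{eq: conditional increment 2}) to the conditional increment control (\ref{eq: conditional increment control}) with the same parameter $(p,h(p))$. Together with the uniform $\alpha$-exponential marginal decay (\ref{eq: exponential decay})---which for $\{Y_t\}$ is precisely condition (\ref{eq: alpha exponential decay})---both hypotheses of Theorem \ref{thm: Main theorem 1} are verified.

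Applying Theorem \ref{thm: Main theorem 1} to $\{Y_t\}$ then yields $\mathbb{P}\big(\sup_{t\in[0,T]}|X_t-X_0|\ge\lambda\big)\le C\lambda^{-1/h}\exp(-c\lambda^\alpha)$ for large $\lambda$, with $c>0$. Absorbing the polynomial prefactor $\lambda^{-1/h}$ into the exponential at the cost of a slightly smaller constant in the exponent gives the clean tail bound (\ref{eq: exponential decay supremum}). For the moment estimate (\ref{eq: moment estimate supremum}) I would integrate this tail: writing $\mathbb{E}\big[\sup_t|Y_t|^p\big]=\int_0^\infty p\lambda^{p-1}\mathbb{P}(\sup_t|Y_t|>\lambda)\,d\lambda$, splitting at the threshold beyond which (\ref{eq: exponential decay supremum}) applies, and bounding the probability by $1$ below it, the tail piece converges like a Gamma integral since $\alpha>0$, giving finiteness for every $p\ge 1$.

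The argument is essentially bookkeeping; the one step requiring care is the parameter selection---confirming that a single admissible $(p,h)$ with $p>1$, $0<h\le 1$ and $ph>1$ exists. This is exactly where the structural input $\gamma\in[1,2)$ enters, through $2-\gamma>0$, and it is the one place where the scheme would break down if $\gamma$ were allowed to reach $2$.
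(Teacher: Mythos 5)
Your proposal is correct and follows essentially the same route as the paper: the paper proves this proposition precisely by applying Theorem \ref{thm: Main theorem 1}, with the conditional increment control obtained via Jensen's inequality from the moment bound (\ref{eq: moment estimate}) and the marginal decay supplied by (\ref{eq: exponential decay}), both from Proposition \ref{prop: lp norm and exponential decay}. Your explicit verification that a large enough $p$ yields an admissible pair $(p,h(p))$ with $p>1$, $0<h\leq1$, $ph>1$ is exactly the bookkeeping the paper leaves implicit, and your integration of the tail for (\ref{eq: moment estimate supremum}) matches the intended argument.
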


\begin{rem*}
Using Kolmogorov's continuity theorem, for any large enough $p$ such
that $\frac{(2-\gamma)(p+d)-d}{2}>1$, inequality (\ref{eq: moment estimate})
implies that there exists $\alpha>0$ such that 
\[
\vert X_{t}-X_{s}\vert\leq K\vert t-s\vert^{\alpha}\qquad\mbox{for all }0\leq s<t\leq T.
\]
Here $K$ is a random variable satisfying $\mathbb{E}[K^{p}]<C$.
Since this is true for any large enough $p$, in this way we can also
prove the moment estimate (\ref{eq: moment estimate supremum}) for
any $p\geq1$, although it can not prove the exponential decay (\ref{eq: exponential decay supremum}).
But actually Theorem \ref{thm: SDE main theorem} below uses only
the moment estimate (\ref{eq: moment estimate supremum}). This provides
an alternative proof to Theorem \ref{thm: SDE main theorem}.
\end{rem*}

\subsection{Existence and uniqueness of strong solutions}

Now we can construct a unique almost everywhere stochastic flow to
(\ref{eq: SDE}) using approximation. The argument essentially follows
Zhang \cite{ZhangXicheng2009}. Firstly, we define almost everywhere
stochastic flow as follows.
\begin{defn}
\label{def: almost everywhere stochastic flow}Suppose $\{X_{t}\}_{t\in[0,T]}$
is a $\rd$-valued stochastic process defined on $\Omega\times\rd\times[0,T]$.
We say that $\{X_{t}\}_{t\in[0,T]}$ is an almost everywhere stochastic
flow to (\ref{eq: SDE}) if
\end{defn}

\begin{description}
\item [{(1)}] for $\mathbb{P}\times m$-almost all $(\omega,x)\in\Omega\times\rd$,
$t\rightarrow X_{t}(\omega,x)$ is a $\rd$-continuous function on
$[0,T]$;
\item [{(2)}] for $\mathbb{P}$-almost all $\omega\in\Omega$, under mapping
$x\rightarrow X_{t}(\omega,x)$, the push-forward of the Lebesgue
measure $m$ restricted to any Borel set $A\subset\rd$ has density,
i.e. $(m1_{A})\circ X_{t}^{-1}(\omega)=\rho_{t}(\omega,A,x)dx$, where
the density satisfies that $\rho_{t}(\omega,A,\cdot)\leq1$ for all
$x\in\rd$ and $\int_{\rd}\rho_{t}(\omega,A,x)dx=m(A)$;
\item [{(3)}] for any $t\in[0,T]$, we have
\[
X_{t}(x)=x+\int_{0}^{t}b(s,X_{s}(x))ds+\int_{0}^{t}dB_{s}
\]
for $\mathbb{P}\times m$-almost all $(\omega,x)\in\Omega\times\rd$.
\end{description}
When the vector field $b$ is smooth and divergence-free, the strong
solution $X_{t}$ to (\ref{eq: SDE}) preserves the Lebesgue measure
in the sense that 
\begin{equation}
\mathbb{P}\left[\omega\in\Omega:m(X_{t}(\omega,A))=m(A)\right]=1,\label{eq: preserve lebesgue measure}
\end{equation}
where $X_{t}(\omega,A)$ is the image of any Borel set $A\in\rd$
under mapping $x\rightarrow X_{t}(\omega,x)$. Clearly $X_{t}$ satisfies
Definition \ref{def: almost everywhere stochastic flow}.

We first recall the following lemma in Crippa and De Lellis \cite[Lemma A.3]{DelellisCrippa2008}.
\begin{lem}
\label{lem: local maximal estimate}Let $M_{R}f$ be the local maximal
function of locally integrable function $f$ defined as 
\[
M_{R}f(x)=\sup_{0<r<R}\frac{1}{\vert B_{r}\vert}\int_{B_{r}(x)}f(y)dy.
\]
Suppose $f\in BV_{loc}(\rd)$, then 
\begin{equation}
\vert f(x)-f(y)\vert\leq C\vert x-y\vert\left[M_{R}\vert\nabla f\vert(x)-M_{R}\vert\nabla f\vert(y)\right]\label{eq: local maximal inequality 1}
\end{equation}
for $x,y\in\rd\backslash N$, where $N$ is a negligible set in $\rd$,
$R=\vert x-y\vert$ is the distance between $x$ and $y$, and constant
$C$ depends only on the dimension $d$. 
\end{lem}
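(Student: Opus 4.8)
The plan is to prove the inequality in its correct form, with the \emph{sum} $M_{R}|\nabla f|(x)+M_{R}|\nabla f|(y)$ on the right-hand side; the difference displayed in \eqref{eq: local maximal inequality 1} is an evident typo, since the left-hand side is nonnegative while a difference of maximal functions may be negative. The argument is the classical Crippa--De Lellis one: I would control the oscillation of $f$ between the Lebesgue points $x,y$ by a telescoping sum of averages over dyadically shrinking balls, estimating each increment by the $(1,1)$-Poincaré inequality and then by the maximal function. Throughout, for $f\in BV_{loc}$ the symbol $\nabla f$ is read as the total-variation measure $|Df|$, and $M_{R}|\nabla f|(z)=\sup_{0<r<R}|Df|(B_{r}(z))/|B_{r}|$. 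The starting point is the Poincaré estimate: for every ball $B_{r}(z)$ with $r<R$,
\[
\frac{1}{|B_{r}(z)|}\int_{B_{r}(z)}|f-(f)_{B_{r}(z)}|\,dy\leq C_{d}\,r\,\frac{|Df|(B_{r}(z))}{|B_{r}(z)|}\leq C_{d}\,r\,M_{R}|\nabla f|(z),
\]
where $(f)_{B_{r}(z)}$ denotes the average of $f$ over $B_{r}(z)$.

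Next I would fix the negligible set $N$ to be the set of points that are not Lebesgue points of $f$ (Lebesgue-a.e.\ point is a Lebesgue point, so $m(N)=0$), and for $x\notin N$ telescope along $r_{k}=2^{-k}R$. Since $(f)_{B_{r_{k}}(x)}\to f(x)$ at a Lebesgue point,
\[
|f(x)-(f)_{B_{R}(x)}|\leq\sum_{k=0}^{\infty}\bigl|(f)_{B_{r_{k+1}}(x)}-(f)_{B_{r_{k}}(x)}\bigr|.
\]
Each increment is bounded by writing it as the average over $B_{r_{k+1}}(x)$ of $f-(f)_{B_{r_{k}}(x)}$, using $B_{r_{k+1}}(x)\subset B_{r_{k}}(x)$, the volume ratio $2^{d}$, and the Poincaré estimate above, which yields the bound $2^{d}C_{d}r_{k}\,M_{R}|\nabla f|(x)$. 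Summing the geometric series gives $|f(x)-(f)_{B_{R}(x)}|\leq C_{d}R\,M_{R}|\nabla f|(x)$, and symmetrically at $y$.

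The remaining step is to compare the two central averages $(f)_{B_{R}(x)}$ and $(f)_{B_{R}(y)}$. The key device is to route the comparison through the common ball $B_{R/2}(w)$ with $w=\tfrac{x+y}{2}$: since $|x-y|=R$ one has $B_{R/2}(w)\subset B_{R}(x)\cap B_{R}(y)$, so the same volume-ratio-plus-Poincaré estimate (applied on $B_{R}(x)$, respectively $B_{R}(y)$) gives $|(f)_{B_{R}(x)}-(f)_{B_{R/2}(w)}|\leq 2^{d}C_{d}R\,M_{R}|\nabla f|(x)$ and likewise at $y$, whence $|(f)_{B_{R}(x)}-(f)_{B_{R}(y)}|\leq 2^{d}C_{d}R\,(M_{R}|\nabla f|(x)+M_{R}|\nabla f|(y))$. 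Adding the three contributions and absorbing all constants into a single $C=C(d)$ produces the claimed inequality.

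The main obstacle is precisely this central comparison while keeping every invoked average at scale $\leq R$: the naive route of enclosing both $B_{R}(x)$ and $B_{R}(y)$ in a single ball forces radius $2R$ and hence $M_{2R}$ rather than $M_{R}$, whereas the intersection ball $B_{R/2}(w)$ is what holds all radii $\leq R$. The secondary technical points I expect to handle are the validity of the $BV$ Poincaré inequality on balls, the harmless boundary-radius issue at $r=R$ (one may apply Poincaré on $B_{r}(x)$ for $r\uparrow R$ and pass to the limit, or note that $|Df|(\partial B_{r})=0$ for a.e.\ $r$), and the identification of $N$ as the non-Lebesgue-point set so that $(f)_{B_{r}(\cdot)}\to f(\cdot)$ at the admissible $x,y$.
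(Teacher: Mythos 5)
Your proof is correct, but note first that the paper itself contains no proof of this statement to compare against: the lemma is simply recalled, typo included, from Crippa and De Lellis \cite[Lemma A.3]{DelellisCrippa2008}. You are right that the minus sign in (\ref{eq: local maximal inequality 1}) is a misprint for a plus --- the cited lemma reads $C\vert x-y\vert\left[M_{R}\vert\nabla f\vert(x)+M_{R}\vert\nabla f\vert(y)\right]$, and the left-hand side being nonnegative forces this --- so proving the corrected form is the right move. Your three ingredients assemble without gaps: the $(1,1)$-Poincar\'e inequality for $BV$ functions on balls, dyadic telescoping of averages at a Lebesgue point, and the comparison of the two scale-$R$ central averages through the intersection ball $B_{R/2}\left(\tfrac{x+y}{2}\right)$, which is exactly what keeps every invoked radius at most $R$ (the naive enclosing ball would force $M_{2R}$, as you observe). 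You also correctly flag and dispose of the two genuine technicalities: the supremum in $M_{R}$ ranges over $r<R$ while Poincar\'e is used at radius exactly $R$, which is handled by continuity from below of the measure $\vert Df\vert$ on the open ball, i.e. $\vert Df\vert(B_{R})=\lim_{r\uparrow R}\vert Df\vert(B_{r})$; and the negligible set $N$ is the complement of the Lebesgue points, so the telescoping series converges to $f(x)$ precisely for the admissible $x,y$.

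Since there is a source proof to compare with: Crippa and De Lellis argue instead through a pointwise Riesz-potential bound, $\vert f(x)-(f)_{B}\vert\leq C\int_{B}\vert x-z\vert^{1-d}\,d\vert Df\vert(z)$ (their Lemma A.2, obtained by averaging over segments for smooth $f$ and passing to $BV$ by approximation), and then dominate the potential by the maximal function via a decomposition of $B$ into dyadic annuli around $x$. Your telescoping of averages over the shrinking balls $B_{2^{-k}R}(x)$ is the standard equivalent shortcut (in the spirit of Haj\l asz's pointwise characterization of Sobolev functions): it trades the potential estimate for one extra application of the volume ratio $2^{d}$ per step, and arrives at the same conclusion with $C=C(d)$ and the same exceptional set. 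Both routes are classical; yours is the more self-contained of the two.
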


We denote by $Mf$ the maximal function 
\[
Mf(x)=\sup_{0<r<\infty}\frac{1}{\vert B_{r}\vert}\int_{B_{r}(x)}f(y)dy
\]
and clearly inequality (\ref{eq: local maximal inequality 1}) is
also true if we replace $M_{R}\vert\nabla f\vert$ with $M\vert\nabla f\vert$.
\begin{lem}
\label{lem: the log estiamte}Suppose $X_{t}(x)$ and $\tilde{X}_{t}(x)$
are almost everywhere stochastic flows to SDE (\ref{eq: SDE}) driven
by the same Brownian motion, with initial data $x$ and drifts $b$
and $\tilde{b}$ in $L^{1}(0,T;W^{1,p}(\rd))$, $p\geq1$ respectively.
Then for any $r>0$ and $\theta>0$,
\[
\mathbb{E}\left[\int_{B_{r}}\log\left(\frac{\sup_{0\leq t\leq T}\vert X_{t}(x)-\tilde{X}_{t}(x)\vert^{2}}{\theta^{2}}+1\right)dx\right]\leq C\left(\Vert\nabla b\Vert_{L^{1}(0,T;L^{p}(\rd))}+\frac{1}{\theta}\Vert b-\tilde{b}\Vert_{L^{1}(0,T;L^{p}(\rd))}\right),
\]
where the constant $C$ depends on $(r,p,d)$.
\end{lem}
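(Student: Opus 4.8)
The plan is to exploit the fact that $X_t(x)$ and $\tilde{X}_t(x)$ are driven by the \emph{same} Brownian motion, so that the stochastic integrals cancel and the difference $D_t(x):=X_t(x)-\tilde{X}_t(x)$ is, for $\mathbb{P}\times m$-almost every $(\omega,x)$, absolutely continuous in $t$ with $D_0=0$ and
\[
\frac{d}{dt}D_t=b(t,X_t)-\tilde{b}(t,\tilde{X}_t).
\]
Hence $g_t(x):=\log\bigl(|D_t|^2/\theta^2+1\bigr)$ is absolutely continuous in $t$, and the ordinary chain rule (there is no It\^o correction, precisely because the noise cancels) gives, for a.e.\ $t$,
\[
\frac{d}{dt}g_t=\frac{2\langle D_t,\,b(t,X_t)-\tilde{b}(t,\tilde{X}_t)\rangle}{|D_t|^2+\theta^2}.
\]
I would then split the numerator as $[b(t,X_t)-b(t,\tilde{X}_t)]+[b(t,\tilde{X}_t)-\tilde{b}(t,\tilde{X}_t)]$.

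For the first (transport) piece I would invoke Lemma \ref{lem: local maximal estimate}, in the global maximal-function form noted just after it, to get $|b(t,X_t)-b(t,\tilde{X}_t)|\le C|D_t|\bigl(M|\nabla b(t)|(X_t)+M|\nabla b(t)|(\tilde{X}_t)\bigr)$; combined with $|D_t|^2/(|D_t|^2+\theta^2)\le1$ this contributes at most $2C\bigl(M|\nabla b(t)|(X_t)+M|\nabla b(t)|(\tilde{X}_t)\bigr)$. For the second piece I would use the elementary bound $2|D_t|/(|D_t|^2+\theta^2)\le 1/\theta$ (from $|D_t|^2+\theta^2\ge 2\theta|D_t|$) to obtain $\theta^{-1}|b(t,\tilde{X}_t)-\tilde{b}(t,\tilde{X}_t)|$. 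Since every term on the right is nonnegative and $g_0=0$, integrating in $t$ and using that $\sup_t g_t=\log\bigl(\sup_t|D_t|^2/\theta^2+1\bigr)$ (as $\log(\cdot/\theta^2+1)$ is increasing) yields the pointwise estimate
\begin{align*}
\log\left(\frac{\sup_{0\le t\le T}|D_t(x)|^2}{\theta^2}+1\right)
&\le 2C\int_0^T\left[M|\nabla b(t)|(X_t)+M|\nabla b(t)|(\tilde{X}_t)\right]dt\\
&\quad+\frac{1}{\theta}\int_0^T\left|b(t,\tilde{X}_t)-\tilde{b}(t,\tilde{X}_t)\right|dt.
\end{align*}

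Next I would integrate over $x\in B_r$ and take expectations; by Fubini (all integrands nonnegative) it suffices to estimate, for fixed $t,\omega$, integrals of the form $\int_{B_r}\phi(t,X_t(\omega,x))\,dx$. Here property (2) of Definition \ref{def: almost everywhere stochastic flow} lets me push the restricted Lebesgue measure forward through $X_t(\omega,\cdot)$ to a density $\rho_t(\omega,B_r,\cdot)$ with $0\le\rho_t\le1$ and total mass $m(B_r)$, so that $\int_{B_r}\phi(t,X_t)\,dx=\int_{\rd}\phi(t,y)\rho_t(y)\,dy\le\|\phi(t)\|_{L^p}\|\rho_t\|_{L^{p'}}\le\|\phi(t)\|_{L^p}\,m(B_r)^{1/p'}$, using $\rho_t^{p'}\le\rho_t$. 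Applying this (with $X_t$ or $\tilde{X}_t$ as appropriate) with $\phi=M|\nabla b(t)|$ together with the $L^p$-boundedness of the Hardy--Littlewood maximal operator, $\|M|\nabla b(t)|\|_{L^p}\le C_{p,d}\|\nabla b(t)\|_{L^p}$, controls the first two terms by $C(r,p,d)\|\nabla b\|_{L^1(0,T;L^p(\rd))}$, and with $\phi=|b-\tilde{b}|$ controls the last term by $\theta^{-1}C(r,p,d)\|b-\tilde{b}\|_{L^1(0,T;L^p(\rd))}$, which is exactly the claimed bound.

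The step I expect to need the most care is the rigorous application of the pointwise inequality of Lemma \ref{lem: local maximal estimate} along the random trajectories: that inequality holds only off a negligible set $N\subset\rd$, so I must check that $X_t(\omega,x),\tilde{X}_t(\omega,x)\notin N$ for $\mathbb{P}\times m\times dt$-almost every $(\omega,x,t)$. This again follows from the density-bounded-by-one property, since $\int_{B_r}\mathbf{1}_N(X_t(\omega,x))\,dx=\int_{\rd}\mathbf{1}_N\rho_t\,dy\le m(N)=0$. I would also flag that the argument genuinely uses $p>1$ (for the $L^p$ bound on the maximal function, the case $p=1$ being borderline), and that the differentiation of $g_t$ is legitimate in the absolutely continuous sense precisely because the shared Brownian motion removes the stochastic term.
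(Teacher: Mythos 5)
Your proposal follows essentially the same route as the paper's proof: differentiate the logarithm (the Brownian terms cancel since both flows are driven by the same noise), split the drift difference into a transport part controlled by the Crippa--De Lellis maximal-function lemma and a remainder controlled by $1/\theta$, then integrate over $B_r$, push the Lebesgue measure forward via the density property (2) of Definition \ref{def: almost everywhere stochastic flow}, and finish with the $L^p$-boundedness of the maximal operator. The only differences are cosmetic --- you use H\"older with $\Vert\rho_t\Vert_{L^{p'}}\le m(B_r)^{1/p'}$ where the paper splits $L^p(\rd)\subset L^1(\rd)+L^\infty(\rd)$ --- and your two flagged caveats (checking the trajectories avoid the negligible set $N$ of Lemma \ref{lem: local maximal estimate}, and that the bound $\Vert M\vert\nabla b\vert\Vert_{L^p}\le C\Vert\nabla b\Vert_{L^p}$ genuinely needs $p>1$ although the lemma is stated for $p\ge1$) are legitimate points that the paper's own proof passes over silently.
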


\begin{proof}
Consider 
\begin{align*}
\frac{d}{dt}\log\left(\frac{\vert X_{t}(x)-\tilde{X}_{t}(x)\vert^{2}}{\theta^{2}}+1\right) & \leq\frac{\vert X_{t}(x)-\tilde{X}_{t}(x)\vert\vert b(t,X_{t}(x))-\tilde{b}(t,\tilde{X}_{t}(x))\vert}{\vert X_{t}(x)-\tilde{X}_{t}(x)\vert^{2}+\theta^{2}}\\
 & \leq\frac{\vert b(t,X_{t}(x))-b(t,\tilde{X}_{t}(x))\vert}{\sqrt{\vert X_{t}(x)-\tilde{X}_{t}(x)\vert^{2}+\theta^{2}}}+\frac{\vert b(t,\tilde{X}_{t}(x))-\tilde{b}(t,\tilde{X}_{t}(x))\vert}{\sqrt{\vert X_{t}(x)-\tilde{X}_{t}(x)\vert^{2}+\theta^{2}}}\\
 & =g_{1}(x)+g_{2}(x).
\end{align*}
Integrate both sides on $B_{r}$ and take expectation, then by Lemma
\ref{lem: local maximal estimate} we have that 
\begin{align*}
\mathbb{E}\left[\int_{B_{r}}g_{1}(x)dx\right] & \leq\mathbb{E}\left[\int_{B_{r}}\frac{C\vert X_{t}(x)-\tilde{X}_{t}(x)\vert(M\vert\nabla b\vert(t,X_{t}(x))+M\vert\nabla b\vert(t,\tilde{X}_{t}(x))}{\sqrt{\vert X_{t}(x)-\tilde{X}_{t}(x)\vert^{2}+\theta^{2}}}dx\right]\\
 & \leq C\int_{\Omega}\left(\int_{B_{r}}M\vert\nabla b\vert(t,X_{t}(\omega,x))dx+\int_{B_{r}}M\vert\nabla b\vert(t,\tilde{X}_{t}(\omega,x))dx\right)d\mathbb{P}(\omega).\\
 & =C\int_{\Omega}\left(\int_{\rd}M\vert\nabla b\vert(t,x)\rho_{t}(\omega,B_{r},x)dx+\int_{\rd}M\vert\nabla b\vert(t,x)\tilde{\rho}_{t}(\omega,B_{r},x)dx\right)d\mathbb{P}(\omega).
\end{align*}
Here $\Vert M\vert\nabla b\vert\Vert_{L^{p}(\rd)}\leq C\Vert\nabla b\Vert_{L^{p}(\rd)}$
and $L^{p}(\rd)\subset L^{1}(\rd)+L^{\infty}(\rd)$. For any $f\in L^{p}(\rd)$,
we have $f=f_{1}+f_{2}$, where $f_{1}=f1_{\{f<\Vert f\Vert_{L^{p}}\}}$
and $f_{2}=f1_{\{f\geq\Vert f\Vert_{L^{p}}\}}$. It is easy to verify
that $\Vert f_{1}\Vert_{L^{\infty}(\rd)}\leq\Vert f\Vert_{L^{p}(\rd)}$
and $\Vert f_{2}\Vert_{L^{1}(\rd)}\leq\Vert f\Vert_{L^{p}(\rd)}$.
By (2) in Definition \ref{def: almost everywhere stochastic flow},
we have that $\Vert\rho_{t}(\omega,B_{r},\cdot)\Vert_{L^{\infty}(\rd)}\leq1$,
$\Vert\rho_{t}(\omega,B_{r},\cdot)\Vert_{L^{1}(\rd)}\leq\vert B_{r}\vert$
and the same is true for $\tilde{\rho}_{t}(\omega,B_{r}\cdot)$. Hence
\begin{align*}
\mathbb{E}\left[\int_{B_{r}}g_{1}(x)dx\right] & \leq2C\int_{\Omega}(1+\vert B_{r}\vert)\Vert\nabla b\Vert_{L^{p}(\rd)}d\mathbb{P}(\omega)\\
 & =2C(1+\vert B_{r}\vert)\Vert\nabla b\Vert_{L^{p}(\rd)}.
\end{align*}
Similarly, we have 
\begin{align*}
\mathbb{E}\left[\int_{B_{r}}g_{2}(x)dx\right] & \leq\frac{1}{\theta}\mathbb{E}\left[\int_{B_{r}}\vert b(t,\tilde{X}_{t}(x))-\tilde{b}(t,\tilde{X}_{t}(x))\vert dx\right]\\
 & \leq\frac{1}{\theta}\int_{\Omega}\int_{\rd}\vert b-\tilde{b}\vert(t,x)\tilde{\rho}_{t}(\omega,B_{r},x)dxd\mathbb{P}(\omega)\\
 & \leq\frac{1}{\theta}(1+\vert B_{r}\vert)\Vert b-\tilde{b}\Vert_{L^{p}(\rd)}.
\end{align*}
Finally, we integrate in $t$, and take supremum over time $t$ for
$\log\left(\frac{\vert X_{t}(x)-\tilde{X}_{t}(x)\vert^{2}}{\theta^{2}}+1\right)$
and the proof is complete.
\end{proof}
Now we are ready to prove the main result in this section. 
\begin{thm}
\label{thm: SDE main theorem}Given a divergence-free vector field
$b\in L^{1}(0,T;W^{1,p}(\rd))\cap L^{l}(0,T;L^{q}(\rd))$ with $d\geq3$,
$p\geq1$, $\frac{2}{l}+\frac{d}{q}\in[1,2)$, there is a unique almost
everywhere stochastic flow $X(\omega,x):\Omega\times\rd\rightarrow C([0,T],\rd)$
to
\[
dX_{t}(\omega,x)=b(t,X_{t}(\omega,x))dt+dB_{t}(\omega),\qquad X_{0}(\omega,x)=x
\]
in space $L^{k}(\Omega\times B_{r};C([0,T],\rd))$ for any $k\geq1$
and $r>0$.
\end{thm}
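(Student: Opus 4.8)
The plan is to construct the flow by smooth approximation and show the approximating sequence is Cauchy in $L^k(\Omega\times B_r;C([0,T],\rd))$. First I would take a sequence $b_n\to b$ of smooth, divergence-free vector fields, uniformly bounded in $L^1(0,T;W^{1,p}(\rd))\cap L^l(0,T;L^q(\rd))$, with $b_n\to b$ in $L^1(0,T;L^p(\rd))$ and $L^l(0,T;L^q(\rd))$. For each $n$ the SDE has a smooth strong solution $X^{(n)}_t(x)$ which preserves Lebesgue measure and hence is an almost everywhere stochastic flow. By Proposition \ref{prop: supremum estimate of diffusion}, applied with the uniform bound $\Lambda=\sup_n\Vert b_n\Vert_{L^l(0,T;L^q(\rd))}$, the quantity $\mathbb{E}[\sup_{t\in[0,T]}\vert X^{(n)}_t(x)-x\vert^p]$ is bounded by a constant independent of $n$, which gives the crucial uniform $L^k(\Omega\times B_r;C([0,T],\rd))$ bound and removes any linear growth hypothesis on the drift.

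Next I would prove the Cauchy property. Fix $m,n$ and apply Lemma \ref{lem: the log estiamte} to $X^{(n)}$ and $X^{(m)}$ (same Brownian motion, drifts $b_n,b_m$), obtaining for every $\theta>0$
\begin{equation}
\mathbb{E}\left[\int_{B_r}\log\left(\frac{\sup_{0\leq t\leq T}\vert X^{(n)}_t(x)-X^{(m)}_t(x)\vert^2}{\theta^2}+1\right)dx\right]\leq C\left(\Vert\nabla b_n\Vert_{L^1(0,T;L^p(\rd))}+\frac{1}{\theta}\Vert b_n-b_m\Vert_{L^1(0,T;L^p(\rd))}\right).\label{eq:log-cauchy}
\end{equation}
The right side stays bounded because $\Vert\nabla b_n\Vert$ is uniform, and by choosing $\theta=\theta_{n,m}\to0$ slowly enough (for instance $\theta=\sqrt{\Vert b_n-b_m\Vert_{L^1(0,T;L^p(\rd))}}$) the bracket tends to $0$ as $m,n\to\infty$. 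A Chebyshev argument in the variable $x$ then converts this logarithmic control into control of the measure of the set where $\sup_t\vert X^{(n)}_t(x)-X^{(m)}_t(x)\vert$ is large, yielding convergence in measure on $\Omega\times B_r$; explicitly, for any $\delta,\eta>0$ one bounds $(\mathbb{P}\times m)(\sup_t\vert X^{(n)}_t-X^{(m)}_t\vert>\delta)$ by the left side of \eqref{eq:log-cauchy} divided by $\log(\delta^2/\theta^2+1)$. Hence $\{X^{(n)}\}$ is Cauchy in measure and has a limit $X$ in measure, which I would upgrade to convergence in $L^k(\Omega\times B_r;C([0,T],\rd))$ using the uniform moment bound to obtain uniform integrability of $\sup_t\vert X^{(n)}_t-X^{(m)}_t\vert^k$.

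With $X$ in hand, I would verify it is an almost everywhere stochastic flow. Passing to an a.e.-convergent subsequence gives continuity of $t\mapsto X_t(\omega,x)$ for a.e.\ $(\omega,x)$, establishing (1). Property (2), the density bound $\rho_t\leq1$ and mass conservation, is inherited in the limit: since each $X^{(n)}_t(\omega,\cdot)$ preserves $m$, the push-forward densities are identically $1$, and lower semicontinuity under weak convergence of the push-forwards preserves the bound $\rho_t\leq1$ while mass is controlled by the measure-preserving approximants. To get the integral equation (3) I would pass to the limit in $X^{(n)}_t(x)=x+\int_0^t b_n(s,X^{(n)}_s(x))ds+B_t$; the stochastic (here additive) term is unchanged, and the drift term converges after a standard change of variables using the density bound, exactly as in the $g_2$ estimate of Lemma \ref{lem: the log estiamte}, since $\int_0^T\int_{\rd}\vert b_n-b\vert(s,x)\rho_s(\omega,B_r,x)\,dx\,ds\to0$. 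Finally, uniqueness follows by applying \eqref{eq:log-cauchy} to two putative flows with the \emph{same} drift $b=\tilde b$, so the right side reduces to $C\Vert\nabla b\Vert_{L^1(0,T;L^p(\rd))}$ uniformly in $\theta$; letting $\theta\to0$ forces the left side, hence $\sup_t\vert X_t-\tilde X_t\vert$, to vanish $\mathbb{P}\times m$-a.e.

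The main obstacle I anticipate is the rigorous passage to the limit in the drift term of (3), because $b$ is only $L^p$-integrable in space and the composition $b(s,X^{(n)}_s(x))$ must be handled through the push-forward densities rather than pointwise; controlling this requires the full strength of the density bound in Definition \ref{def: almost everywhere stochastic flow}(2) together with the decomposition $L^p\subset L^1+L^\infty$ used in Lemma \ref{lem: the log estiamte}. A secondary subtlety is ensuring the limiting flow still satisfies the density bound $\rho_t\leq1$, since this property must survive the limit in measure; here the measure-preserving nature of each smooth approximant is what makes the argument work.
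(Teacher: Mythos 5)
Your overall strategy coincides with the paper's: smooth divergence-free approximations $b_n$, uniform supremum bounds from Proposition \ref{prop: supremum estimate of diffusion}, the logarithmic estimate of Lemma \ref{lem: the log estiamte} to obtain a Cauchy property, an upgrade to $L^{k}(\Omega\times B_{r};C([0,T],\rd))$ by uniform integrability, and uniqueness by applying the same lemma with $\tilde{b}=b$ and letting $\theta\rightarrow0$. The one step where you genuinely depart from the paper is the Cauchy argument, and your version is cleaner: you apply Chebyshev's inequality directly on the product measure $\mathbb{P}\times m$, bounding $(\mathbb{P}\times m)\left(\sup_{t}\vert X_{t}^{(n)}-X_{t}^{(m)}\vert>\delta\right)$ by the logarithmic estimate divided by $\log(\delta^{2}/\theta_{n,m}^{2}+1)$. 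Note that the right-hand side of the logarithmic estimate does \emph{not} tend to zero, contrary to your phrase ``the bracket tends to $0$'' --- the term $\Vert\nabla b_{n}\Vert_{L^{1}(0,T;L^{p})}$ is only uniformly bounded --- but boundedness is all your argument needs, since the denominator $\log(\delta^{2}/\theta_{n,m}^{2}+1)$ blows up as $\theta_{n,m}\rightarrow0$. This gives convergence in measure with no truncation at all. The paper instead controls the integral $\int_{B_{r}}\sup_{t}\vert X_{t}^{(n)}-X_{t}^{(m)}\vert^{2}dx$, and to do so it must introduce the truncation sets $O_{n,m}^{R}$ (where both flows stay bounded by $R$) so as to have the pointwise bound $4R^{2}$ on the exceptional set; your route avoids this machinery, and Vitali's theorem with the uniform moment bounds then closes the loop exactly as in the paper.

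There is, however, a gap in your verification of property (3), the integral identity. Passing to the limit in $\int_{0}^{t}b_{n}(s,X_{s}^{(n)}(x))ds$ requires handling \emph{two} error terms: (a) $\vert b_{n}-b\vert$ evaluated along $X^{(n)}$, and (b) $\vert b(s,X_{s}^{(n)})-b(s,X_{s})\vert$, the same drift along two different processes. Your proposed mechanism --- change of variables with the density bound, ``exactly as in the $g_{2}$ estimate'' --- disposes of (a) only; the $g_{2}$ estimate in Lemma \ref{lem: the log estiamte} compares two drifts along one process, never one drift along two processes. For (b), the density bound and the decomposition $L^{p}\subset L^{1}+L^{\infty}$ are not sufficient: convergence $X^{(n)}\rightarrow X$ in measure gives no control of $b(s,X_{s}^{(n)})-b(s,X_{s})$ when $b$ is merely $L^{p}$ in space. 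The missing device, which is precisely what the paper's Step 2 supplies, is a second auxiliary smooth approximation $b_{\epsilon}$ of $b$: write
\[
\vert b(s,X_{s}^{(n)})-b(s,X_{s})\vert\leq\vert b-b_{\epsilon}\vert(s,X_{s}^{(n)})+\vert b_{\epsilon}(s,X_{s}^{(n)})-b_{\epsilon}(s,X_{s})\vert+\vert b_{\epsilon}-b\vert(s,X_{s}),
\]
control the two outer terms by the change-of-variables/density argument (each is of order $\Vert b-b_{\epsilon}\Vert_{L^{1}(0,T;L^{p})}$ uniformly in $n$), and send $n\rightarrow\infty$ in the middle term using the continuity of $b_{\epsilon}$ together with the $L^{k}$ convergence $X^{(n)}\rightarrow X$. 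With this insertion your proof is complete; without it, the limit passage in the drift term does not go through.
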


\begin{proof}
Step 1: We prove the existence of solution $X_{t}$ using approximation.
By cut-off and mollification, we can find a sequence of divergence-free
$b^{(n)}\in C([0,T],C_{0}^{\infty}(\rd))$ such that $b^{(n)}\rightarrow b$
in $L^{1}(0,T;W^{1,p}(\rd))\cap L^{l}(0,T;L^{q}(\rd))$ and denote
by $X_{t}^{(n)}$ the corresponding solution. We first prove that
$X_{t}^{(n)}$ is a Cauchy sequence in space $L^{k}(\Omega\times B_{r};C([0,T]))$
for any $k\geq1$ and $r>0$. Denote 
\[
O_{n,m}^{R}(\omega)=\left\{ x\in\rd:\sup_{0\leq t\leq T}\vert X_{t}^{(n)}(\omega,x)\vert<R,\sup_{0\leq t\leq T}\vert X_{t}^{(m)}(\omega,x)\vert<R\right\} 
\]
and by Proposition \ref{prop: supremum estimate of diffusion} we
have that for any fixed $r>0$, 
\begin{equation}
\lim_{R\rightarrow\infty}\sup_{n,m}\sup_{x\in B_{r}}\mathbb{P}(\omega:x\notin O_{n,m}^{R}(\omega))=0.\label{eq: theorem local inequality 1}
\end{equation}
Set $S_{T}^{(n,m)}(\omega,x)=\sup_{0\leq t\leq T}\vert X_{t}^{(n)}(\omega,x)-X_{t}^{(m)}(\omega,x)\vert^{2}$,
then for any fixed $\delta>0$ we have 
\begin{align*}
\mathbb{P}\left(\omega:\int_{B_{r}}S_{T}^{(n,m)}(\omega,x)dx\geq2\delta\right) & \leq\mathbb{P}\left(\omega:\int_{B_{r}\cap O_{n,m}^{R}(\omega)}S_{T}^{(n,m)}(\omega,x)dx\geq\delta\right)\\
 & \quad+\mathbb{P}\left(\omega:\int_{B_{r}\backslash O_{n,m}^{R}(\omega)}S_{T}^{(n,m)}(\omega,x)dx\geq\delta\right)\\
 & =I_{1}^{(n,m)}+I_{2}^{(n,m)}.
\end{align*}
To show convergence in probability of $X_{t}^{(n)}$, for any $\epsilon>0$,
we find $R$ and large enough $n,m$ such that $I_{i}^{n,m}\leq\epsilon$,
$i=1,2$. We first estimate the second term $I_{2}^{(n,m)}$
\begin{align*}
I_{2}^{(n,m)} & \leq\frac{1}{\delta}\mathbb{E}\left[\int_{B_{r}\backslash O_{n,m}^{R}(\omega)}\sup_{0\leq t\leq T}\vert X_{t}^{(n)}(\omega,x)-X_{t}^{(m)}(\omega,x)\vert^{2}dx\right]\\
 & \leq\frac{1}{\delta}\int_{B_{r}}\int_{\Omega}\sup_{0\leq t\leq T}\vert X_{t}^{(n)}(\omega,x)-X_{t}^{(m)}(\omega,x)\vert^{2}1_{\{\omega:x\notin O_{n,m}^{R}(\omega)\}}d\mathbb{P}(\omega)dx\\
 & \leq\frac{1}{\delta}\int_{B_{r}}4\mathbb{E}\left[\sup_{0\leq t\leq T}\vert X_{t}^{(n)}(\omega,x)-x\vert^{4}+\sup_{0\leq s\leq t}\vert X_{t}^{(m)}(\omega,x)-x\vert^{4}\right]^{\frac{1}{2}}\mathbb{P}(\omega:x\notin O_{n,m}^{R}(\omega))^{\frac{1}{2}}dx\\
 & \leq\epsilon
\end{align*}
for large enough $R>M$ by (\ref{eq: theorem local inequality 1})
and Proposition \ref{prop: supremum estimate of diffusion}. Here
the choice of $M$ is independent of $(n,m)$. To obtain the estimate
that $I_{2}^{(n,m)}\leq\epsilon$, we fixed an $R$. With the same
$R$, next we estimate $I_{1}^{(n,m)}$. For any $\omega\in\Omega$,
if
\[
\int_{B_{r}}\log\left(\frac{S_{T}^{(n,m)}(\omega,x)}{\theta^{2}}+1\right)dx\leq L,
\]
we have that $\vert\{S_{T}^{(n,m)}(\omega,x)\geq\theta^{2}(e^{L^{2}}-1)\}\vert\leq\frac{1}{L}$,
which implies 
\begin{align*}
\int_{B_{r}\cap O_{n,m}^{R}(\omega)}S_{T}^{n,m}(\omega,x)dx & =\int_{B_{r}\cap O_{n,m}^{R}(\omega)}S_{T}^{(n,m)}(\omega,x)1_{\{S_{T}^{(n,m)}(x)\geq\theta^{2}(e^{L^{2}}-1)\}}dx\\
 & \quad+\int_{B_{r}\cap O_{n,m}^{R}(\omega)}S_{T}^{(n,m)}(\omega,x)1_{\{S_{T}^{(n,m)}(x)<\theta^{2}(e^{L^{2}}-1)\}}dx\\
 & \leq\theta^{2}(e^{L^{2}}-1)\vert B_{r}\vert+4R^{2}\frac{1}{L}.
\end{align*}
Now we set $\theta^{(n,m)}=\Vert b^{(n)}-b^{(m)}\Vert_{L_{t}^{1}L_{x}^{p}}$
to obtain 
\[
\sup_{n,m}\mathbb{E}\left[\int_{B_{r}}\log\left(\frac{S_{T}^{(n,m)}}{(\theta^{(n,m)})^{2}}+1\right)dx\right]\leq C
\]
by Lemma \ref{lem: the log estiamte}, which implies that 
\[
\sup_{n,m}\mathbb{P}\left(\int_{B_{r}}\log\left(\frac{S_{T}^{(n,m)}}{(\theta^{(n,m)})^{2}}+1\right)dx\geq L\right)\leq\frac{C}{L}.
\]
For fixed $\delta>0$ and the fixed $R$ obtained from the estimate
of $I_{2}^{(n,m)}$, we can first choose $L$ large enough and them
choose $(n,m)$ large enough, which means $\theta^{(n,m)}$ is small
enough, such that 
\[
(\theta^{(n,m)})^{2}(e^{L^{2}}-1)\vert B_{r}\vert+4R^{2}\frac{1}{L}<\delta\quad\mbox{and}\quad\frac{C}{L}\leq\epsilon.
\]
Hence 
\[
\mathbb{P}\left(\omega:\int_{B_{r}\cap O_{n,m}^{R}(\omega)}S_{T}^{n,m}(\omega,x)dx\geq\delta,\int_{B_{r}}\log\left(\frac{S_{T}^{(n,m)}(\omega,x)}{(\theta^{(n,m)})^{2}}+1\right)dx\leq L\right)=0,
\]
which implies that 
\begin{align*}
I_{1}^{(n,m)} & =\mathbb{P}\left(\omega:\int_{B_{r}\cap O_{n,m}^{R}(\omega)}S_{T}^{n,m}(\omega,x)dx\geq\delta,\int_{B_{r}}\log\left(\frac{S_{T}^{(n,m)}(\omega,x)}{(\theta^{(n,m)})^{2}}+1\right)dx>L\right)\\
 & \leq\mathbb{P}\left(\omega:\int_{B_{r}}\log\left(\frac{S_{T}^{(n,m)}(\omega,x)}{(\theta^{(n,m)})^{2}}+1\right)dx>L\right)\\
 & \leq\epsilon.
\end{align*}
Now we have that for any $\epsilon>0$, there is $(n,m)$ large enough
such that 
\[
\mathbb{P}\left(\omega:\int_{B_{r}}S_{T}^{(n,m)}(\omega,x)dx\geq2\delta\right)\leq2\epsilon.
\]
Hence 
\[
\lim_{n,m\rightarrow\infty}\mathbb{P}\left(\omega:\int_{B_{r}}S_{T}^{(n,m)}(\omega,x)dx\geq2\delta\right)=0
\]
for any fixed $\delta$. This implies that for any fixed $r>0$, $\{X_{t}^{(n)}\}$
converges in probability under the finite measure $\mathbb{P}\times m1_{B_{r}}$
as functions $X^{(n)}:\Omega\times B_{r}\rightarrow C([0,T],\rd)$.
Recall that for any $k\geq1$ we have
\[
\sup_{n,x\in B_{r}}\mathbb{E}\left[\sup_{0\leq s\leq t}\vert X_{s}^{(n)}(x)\vert^{k}\right]<\infty
\]
by Proposition \ref{prop: supremum estimate of diffusion}, which
means that for any fixed $k\geq1$, $\sup_{0\leq t\leq T}\vert X_{t}^{(n)}(x)\vert^{k}$
are uniformly integrable. This implies that for any fixed $r>0$,
$\{X_{t}^{(n)}\}_{n}$ is a Cauchy sequence in $L^{k}(\Omega\times B_{r};C([0,T]))$
and we denote the limit as $X_{t}$. Since each $X_{t}^{(n)}$ satisfies
Definition \ref{def: almost everywhere stochastic flow}, it is easy
to verify that their limit $X_{t}$ also satisfies (1) and (2) of
Definition \ref{def: almost everywhere stochastic flow}.

Step 2: Now we verify that the limit $X_{t}$ satisfies (3) of Definition
\ref{def: almost everywhere stochastic flow}, i.e. if we define 
\[
Y_{t}(x)=x+\int_{0}^{t}b(s,X_{s}(x))ds+\int_{0}^{t}dB_{s},
\]
then $X_{t}=Y_{t}$ in space $L^{1}(\Omega\times B_{r};C([0,T]))$.
Consider 
\[
\sup_{0\leq t\leq T}\vert X_{t}^{n}(x)-Y_{t}(x)\vert\leq\int_{0}^{T}\vert b^{n}(t,X_{t}^{(n)}(x))-b(t,X_{t}(x))\vert dt.
\]
Then integrate both sides on $B_{r}$ and take expectation, we have
that 
\begin{align*}
\mathbb{E}\left[\int_{B_{r}}\sup_{0\leq t\leq T}\vert X_{t}^{n}(x)-Y_{t}(x)\vert dx\right] & \leq\mathbb{E}\left[\int_{B_{r}}\int_{0}^{T}\vert b^{n}(t,X_{t}^{(n)}(x))-b(t,X_{t}^{(n)}(x))\vert dtdx\right]\\
 & \quad+\mathbb{E}\left[\int_{B_{r}}\int_{0}^{T}\vert b(t,X_{t}^{(n)}(x))-b(t,X_{t}(x))\vert dtdx\right]\\
 & =I_{1}+I_{2}.
\end{align*}
Again by (2) in Definition \ref{def: almost everywhere stochastic flow},
we have that $I_{1}\leq(1+\vert B_{r}\vert)\Vert b-b^{(n)}\Vert_{L_{t}^{1}L_{x}^{p}}$.
For the second term, we will find another smooth $b_{\epsilon}$ such
that $\Vert b_{\epsilon}-b\Vert_{L_{t,x}^{p}}\leq\epsilon$ and then
separate $I_{2}$ into three parts
\begin{align*}
I_{2} & \leq\mathbb{E}\left[\int_{B_{r}}\int_{0}^{T}\vert b_{\epsilon}(t,X_{t}^{(n)}(x))-b_{\epsilon}(t,X_{t}(x))\vert dtdx\right]\\
 & \quad+\mathbb{E}\left[\int_{B_{r}}\int_{0}^{T}\vert b_{\epsilon}(t,X_{t}^{(n)}(x))-b(t,X_{t}^{(n)}(x))\vert dtdx\right]\\
 & \quad+\mathbb{E}\left[\int_{B_{r}}\int_{0}^{T}\vert b_{\epsilon}(t,X_{t}(x))-b(t,X_{t}(x))\vert dtdx\right].
\end{align*}
For the second and the third terms, we control them just as $I_{1}$.
The first term converges to $0$ as $n\rightarrow0$ since $X_{t}^{(n)}\rightarrow X_{t}$
in $L^{k}(\Omega\times B_{r};C([0,T]))$ and now we can conclude that
$X_{t}=Y_{t}$ $\mathbb{P}\times m$-almost everywhere.

Step 3: Finally we prove that the solution is unique. Suppose that
we have two almost everywhere stochastic flows $X_{t}$ and $\tilde{X}_{t}$
corresponding to the same $b\in L^{1}(0,T;W^{1,p}(\rd))\cap L^{l}(0,T;L^{q}(\rd))$
and we apply Lemma \ref{lem: the log estiamte} to them to deduce
that 
\[
\mathbb{E}\left[\int_{B_{r}}\log\left(\frac{\sup_{0\leq t\leq T}\vert X_{t}(x)-\tilde{X}_{t}(x)\vert^{2}}{\theta^{2}}+1\right)dx\right]\leq C\Vert\nabla b\Vert_{L_{t}^{1}L_{x}^{p}},
\]
which is uniform for all $\theta>0$. Hence we can take $\theta\rightarrow0$
and now the proof is complete. 
\end{proof}
\textbf{Acknowledgment:} The Authors would like to thank our supervisor
Professor Zhongmin Qian for bringing these questions to our attention
and discussing constantly with us. We also want to thank the reviewers
for pointing out an alternative proof of Theorem \ref{thm: SDE main theorem}
using Kolmogorov's continuity theorem.

\bibliographystyle{plain}
\bibliography{MaximumInequality}

\begin{thebibliography}{10}

\bibitem{Aronson1968}
D.~G. Aronson.
\newblock Non-negative solutions of linear parabolic equations.
\newblock {\em Ann. Scuola Norm. Sup. Pisa (3)}, 22:607--694, 1968.

\bibitem{Bogachev1998}
V.~I. Bogachev.
\newblock {\em Gaussian measures}, volume~62 of {\em Mathematical Surveys and
  Monographs}.
\newblock American Mathematical Society, Providence, RI, 1998.

\bibitem{DelellisCrippa2008}
G.~Crippa and C.~De~Lellis.
\newblock Estimates and regularity results for the {D}i{P}erna-{L}ions flow.
\newblock {\em J. Reine Angew. Math.}, 616:15--46, 2008.

\bibitem{DiPernaLions1989}
R.~J. DiPerna and P.-L. Lions.
\newblock Ordinary differential equations, transport theory and {S}obolev
  spaces.
\newblock {\em Invent. Math.}, 98(3):511--547, 1989.

\bibitem{FangLuoThalmaier2010}
S.~Fang, D.~Luo, and A.~Thalmaier.
\newblock Stochastic differential equations with coefficients in {S}obolev
  spaces.
\newblock {\em J. Funct. Anal.}, 259(5):1129--1168, 2010.

\bibitem{FedrizziFlandoli2011}
E.~Fedrizzi and F.~Flandoli.
\newblock Pathwise uniqueness and continuous dependence of {SDE}s with
  non-regular drift.
\newblock {\em Stochastics}, 83(3):241--257, 2011.

\bibitem{FedrizziFlandoli2013}
E.~Fedrizzi and F.~Flandoli.
\newblock H\"{o}lder flow and differentiability for {SDE}s with nonregular
  drift.
\newblock {\em Stoch. Anal. Appl.}, 31(4):708--736, 2013.

\bibitem{MR2604669}
P.~K. Friz and N.~B. Victoir.
\newblock {\em Multidimensional stochastic processes as rough paths}, volume
  120 of {\em Cambridge Studies in Advanced Mathematics}.
\newblock Cambridge University Press, Cambridge, 2010.
\newblock Theory and applications.

\bibitem{KrylovRockner2005}
N.~V. Krylov and M.~R\"{o}ckner.
\newblock Strong solutions of stochastic equations with singular time dependent
  drift.
\newblock {\em Probab. Theory Related Fields}, 131(2):154--196, 2005.

\bibitem{MR2036784}
T.~Lyons and Z.~Qian.
\newblock {\em System control and rough paths}.
\newblock Oxford Mathematical Monographs. Oxford University Press, Oxford,
  2002.
\newblock Oxford Science Publications.

\bibitem{McLeish1975}
D.~L. McLeish.
\newblock A maximal inequality and dependent strong laws.
\newblock {\em Ann. Probability}, 3(5):829--839, 1975.

\bibitem{StefanoSzekelyhidi2018}
S.~Modena and L.~Sz\'{e}kelyhidi~Jr.
\newblock Non-uniqueness for the transport equation with {S}obolev vector
  fields.
\newblock {\em Ann. PDE}, 4(2):Art. 18, 38, 2018.

\bibitem{qian2017parabolic}
Z.~Qian and G.~Xi.
\newblock Parabolic equations with divergence-free drift in space {$L^l_t
  L^q_x$}.
\newblock {\em Indiana University Mathematics Journal}, To appear.

\bibitem{Talagrand1996}
M.~Talagrand.
\newblock Majorizing measures: the generic chaining.
\newblock {\em Ann. Probab.}, 24(3):1049--1103, 1996.

\bibitem{Talagrand2014}
M.~Talagrand.
\newblock {\em Upper and lower bounds for stochastic processes: modern methods
  and classical problems}, volume~60.
\newblock Springer Science \& Business Media, 2014.

\bibitem{ZhangXicheng2009}
X.~Zhang.
\newblock Stochastic flows of {SDE}s with irregular coefficients and stochastic
  transport equations.
\newblock {\em Bull. Sci. Math.}, 134(4):340--378, 2010.

\bibitem{ZhangXicheng2010}
X.~Zhang.
\newblock Well-posedness and large deviation for degenerate {SDE}s with
  {S}obolev coefficients.
\newblock {\em Rev. Mat. Iberoam.}, 29(1):25--52, 2013.

\end{thebibliography}

\end{document}